\documentclass[hidelinks,onefignum,onetabnum]{siamart251216}

\usepackage{graphicx}
\usepackage{subcaption}
\usepackage{multirow}

\usepackage{lipsum}
\usepackage{amsfonts}
\usepackage{graphicx}
\usepackage{epstopdf}
\usepackage{algorithmic}
\ifpdf
  \DeclareGraphicsExtensions{.eps,.pdf,.png,.jpg}
\else
  \DeclareGraphicsExtensions{.eps}
\fi

\newsiamremark{remark}{Remark}
\newsiamremark{hypothesis}{Hypothesis}
\crefname{hypothesis}{Hypothesis}{Hypotheses}
\newsiamthm{claim}{Claim}
\newsiamremark{fact}{Fact}
\crefname{fact}{Fact}{Facts}

\headers{AlgMortar: a fully algebraic multiscale mortar preconditioner}{L. F. Santos, F. S. Sousa, R. F. Ausas, R. T. Guiraldello, F. Pereira}

\title{AlgMortar: a fully algebraic multiscale mortar preconditioner\thanks{
\funding{The authors acknowledge financial support from the National Agency of Petroleum, Natural Gas and Biofuels (ANP/Petrobras, Brazil) grant 2023/00510-0.
This work is partially supported by the National Science Foundation (NSF, USA) grant 2401945. F.S.S. and R.F.A. are partially funded by National Council for Scientific and Technological Development (CNPq, Brazil) grants 312372/2023-0 and 308704/2022-3, respectively.
This research was carried out using the computational resources of the Center for Mathematical Sciences Applied to Industry (CeMEAI) funded by São Paulo Research Foundation (FAPESP, Brazil) grant 2013/07375-0.
Any opinions, findings, and conclusions or recommendations expressed in this material are those of the authors and do not necessarily reflect the views of NSF, ANP/Petrobras, CNPq or FAPESP. 
}}}

\author{Luan F. Santos\thanks{Instituto de Ciências Matemáticas e de Computação,
Universidade de São Paulo, Av. Trab. São Carlense 400,
13566-590, São Carlos-SP, Brazil
(\email{luan.santos@icmc.usp.br}, \email{fsimeoni@icmc.usp.br},
\email{rfausas@icmc.usp.br}).}
\and Fabricio S. Sousa\footnotemark[2]
\and Roberto F. Ausas\footnotemark[2]
\and Rafael T. Guiraldello\thanks{Piri Technologies, LLC,
1000 E. University Ave., Dept. 4311,
Laramie, WY 82071-2000, USA
(\email{rafaeltrevisanuto@gmail.com}).}
\and Felipe Pereira\thanks{Department of Mathematical Sciences,
The University of Texas at Dallas, 800 W. Campbell Road,
Richardson, TX 75080-3021, USA
(\email{felipe.pereira@utdallas.edu}).}}

\usepackage{amsopn}

\begin{document}

\maketitle

\begin{abstract}
The solution of large-scale symmetric positive definite linear systems arising from discretizations of second-order elliptic equations is challenging, especially in applications with highly heterogeneous coefficients, such as flow in porous media, which can lead to severely ill-conditioned systems. In this context, multiscale methods have recently been used to accelerate Krylov subspace methods, owing to their favorable parallel scalability. In this work, we present AlgMortar, a fully algebraic realization of the Multiscale Mortar Mixed Finite Element Method (MMMFEM). The method uses only information extracted from the fine-grid system matrix, which facilitates its implementation in existing solvers. AlgMortar uses graph partitioning to define a domain decomposition directly from the matrix graph. On each subdomain, it builds local linear systems that mimic Dirichlet problems, and couples the resulting local solutions through an algebraic interface condition that recovers the weak flux-continuity mechanism of MMMFEM. We prove that the method is well posed when the fine-grid matrix is symmetric positive definite and has nonpositive off-diagonal entries, a structure commonly arising from discretizations of elliptic problems. Numerical experiments on fine-grid linear systems arising from finite-volume discretizations of Darcy flow problems show that, when used as a preconditioner for the conjugate gradient method, the proposed approach exhibits good scalability and is competitive with state-of-the-art algebraic multigrid methods for challenging heterogeneous, high-contrast test cases, including highly irregular corner-point grids.
\end{abstract}

\begin{keywords}
multiscale, algebraic preconditioner,  domain decomposition, porous media, high performance computing
\end{keywords}

\begin{MSCcodes}
65F08, 65N55, 65Y05, 65F10
\end{MSCcodes}

\section{Introduction}
In this paper, we consider the solution of sparse and typically ill-conditioned
symmetric positive definite (SPD) linear systems arising from fine-grid
discretizations of second-order elliptic problems of the form
\begin{equation}
\label{eq:elliptic-problem}
-\nabla\cdot\left(K\nabla p\right)=f
\quad \text{in } \Omega\subset\mathbb{R}^d,\qquad d=2,3,
\end{equation}
supplemented with suitable boundary conditions. Here, $p$ usually represents pressure,
$f$ is a source term, and $K$ is a SPD tensor.
Problems of this type arise in many applications, including flow in
porous media, subsurface reservoir simulation, groundwater flow, and heat
conduction in heterogeneous materials. Their accurate numerical approximation
often requires fine grids, leading to large-scale linear systems.

To leverage massively parallel architectures, linear solvers must exhibit good
parallel scalability. In particular, scalable preconditioners are essential for
Krylov subspace methods such as the preconditioned conjugate gradient (PCG)
method and GMRES.
Domain decomposition preconditioners \cite{toselli2005dd,dolean2015ddm} are well suited to this task because they decompose the global problem into local subproblems that can be solved concurrently.
Multiscale methods form an important class of domain decomposition approaches.
They construct basis functions that capture local heterogeneity and couple them
through a coarse problem whose solution defines the global approximation. This strategy underlies a broad range of multiscale
frameworks
\cite{hou1997mfem,jenny2003multiscalefv,efendiev2013gmsfem,lipnikov2008m3,moyner2016msrsb}.


Several multiscale methods based on the mixed formulation of
Eq.~\eqref{eq:elliptic-problem} have also been proposed. The Multiscale Mortar
Mixed Finite Element Method (MMMFEM) \cite{arbogast2007mortar} constructs
multiscale basis functions from local Dirichlet problems and weakly enforces
continuity of the normal flux. In contrast, the Multiscale Hybrid-Mixed method
(MHM) \cite{harder2013mhm} uses local Neumann problems and weakly imposes
pressure continuity across interfaces. The Multiscale Robin Coupled Method
(MRCM) \cite{guiraldello2018mrcm} uses Robin boundary conditions and weakly
enforces continuity of both pressure and normal flux. For suitable asymptotic limits of the Robin parameter, MRCM recovers either MMMFEM or MHM.

The use of multiscale methods as preconditioners has been explored in several works, including
\cite{ye2024twoleveloverlap,fu2024preconditioner,changqing2025highconstrast,carvalho2025preconditioner,jaramillo2025porenetwork}.
In particular, \cite{carvalho2025preconditioner} employs MRCM as a PCG preconditioner for a global pressure system arising from a fine-grid finite-volume discretization of Eq.~\eqref{eq:elliptic-problem}. The reported results show that the best computational performance is achieved in the MMMFEM limit, with good scalability and competitive performance relative to state-of-the-art algebraic multigrid (AMG) methods.


The multiscale methods described above typically rely on geometric information, PDE coefficients, and discretization-specific details. In industrial subsurface-flow simulations, however, such information may be highly complex, proprietary, unavailable, or costly to transfer between codes. Algebraic preconditioners address this limitation by operating solely on the global discretization matrix and right-hand side (RHS), allowing them to accommodate arbitrary grids, unstructured discretizations, mixed-dimensional models, and highly heterogeneous coefficients without requiring the manual reconstruction of coarse meshes or geometric neighborhoods.
This operator-only approach simplifies integration with legacy reservoir simulators, streamlines preprocessing and model handoff between teams, and reduces sources of error and overhead associated with grid conversion. Moreover, algebraic frameworks enable robust, black-box multilevel coarsening and interpolation strategies that preserve important algebraic features (such as connectivity and anisotropy) of the discrete operator, improving convergence in challenging heterogeneous and anisotropic media and supporting scalable parallel implementations required for large industrial problems. 

For these reasons, several algebraic multiscale approaches have been proposed to construct coarse spaces and multiscale operators directly from the algebraic system. These include the two-stage algebraic multiscale (TAMS) method \cite{zhou2011tams}, the algebraic multiscale solver (AMS) \cite{wang2014algebraic}, the multiscale two-point flux approximation method (MsTPFA) \cite{moyner2014tpfa}, and other related formulations \cite{tene2016fams,akbari2020robinalgebraic,desouza2022algebraic,zhou2026algebraicmultiscalepreconditionerlarge}.

Motivated by the MRCM preconditioning results in the MMMFEM limit \cite{carvalho2025preconditioner} and by the need for algebraic solvers, we introduce in this paper AlgMortar, a fully algebraic realization of MMMFEM. Using only an SPD matrix, AlgMortar constructs graph-based subdomains, local algebraic Dirichlet operators, and interface conditions that reproduce the weak flux-continuity condition. It exactly recovers MMMFEM on uniform Cartesian grids when $K$ is the identity tensor and remains effective as a PCG preconditioner for heterogeneous problems.

We establish the well-posedness of AlgMortar for SPD matrices with nonpositive interface couplings, as in $M$-matrices. Both the interface system and the reduced  system obtained by eliminating the interface variable are positive definite, enabling the use of AlgMortar as a PCG preconditioner. We also derive condition-number estimates showing that spectral clustering depends on how well the interface space approximates average pressures on graph-partition interfaces.

Finally, we assess AlgMortar as a PCG preconditioner for fine-grid finite-volume systems arising from Eq.~\eqref{eq:elliptic-problem} on Cartesian and irregular corner-point grids. Numerical experiments demonstrate favorable parallel scalability and show that AlgMortar can outperform AMG for large-scale problems with high-contrast permeability fields.

The paper is organized as follows. Section~\ref{sec:mmmfem} reviews MMMFEM, and Section~\ref{sec:alg-mortar} introduces AlgMortar. Its well-posedness, use as a PCG preconditioner, and implementation details are presented in Sections~\ref{sec:well-posed}-\ref{sec:mortar-pc-algorithm}. Section~\ref{sec:num-exp} reports numerical results on Cartesian and corner-point grids, and Section~\ref{sec:conclusions} concludes the paper.

\section{MMMFEM overview}
\label{sec:mmmfem}

This section reviews the multiscale mixed mortar finite element method (MMMFEM) \cite{arbogast2000mortar,arbogast2007mortar}. The method combines domain decomposition with local mixed finite element discretizations and introduces the interface pressure as an additional unknown, called the mortar variable. We present its formulation and solution using multiscale basis functions and a global interface problem, emphasizing the components later reproduced algebraically. For brevity, we consider only Dirichlet boundary conditions; more general cases are discussed in \cite{ganis2009multiflux,guiraldello2018mrcm}.

\subsection{MMMFEM variational formulation}
\label{sec:intro-mmmfem-var}

Let $\Omega\subset\mathbb{R}^d$, $d=2,3$, be a bounded Lipschitz domain. Let
$K\in[L^\infty(\Omega)]^{d\times d}$ be symmetric and uniformly positive definite tensor,
$f\in L^2(\Omega)$ the source term, and
$g\in H^{1/2}(\partial\Omega)$ the Dirichlet boundary data.

Let $\Omega$ be partitioned into $m$ non-overlapping blocks. For $i\neq j$, define the interface
$\Gamma_{ij}=\partial\Omega_i\cap\partial\Omega_j$, the skeleton
$\Gamma=\bigcup_{1\leq i<j\leq m}\Gamma_{ij}$, and the local skeleton
$\Gamma_i=\partial\Omega_i\cap\Gamma$.
On each $\Omega_i$, let $\mathcal{T}_h^i$ be a local mesh of size $h$, and consider mixed finite element spaces
$\boldsymbol{V}_{h,i}\subset H(\operatorname{div};\Omega_i)$ and
$Q_{h,i}\subset L^2(\Omega_i)$, such as the Raviart-Thomas
 spaces \cite{raviart1977mixed}. Let $\boldsymbol{n}_i$ denote the outward unit normal to $\partial\Omega_i$, and let $M_H\subset L^2(\Gamma)$ be the mortar space used to approximate the interface pressure.

The MMMFEM variational formulation is to find
$(\boldsymbol{u}_{h,i},p_{h,i})\in
\boldsymbol{V}_{h,i}\times Q_{h,i}$, $i=1,\ldots,m$, and
$\lambda_H\in M_H$ such that
\begin{align}
(K^{-1}\boldsymbol{u}_{h,i},\boldsymbol{v})_{\Omega_i}
-
(p_{h,i},\nabla\cdot\boldsymbol{v})_{\Omega_i}
&=
-\langle\lambda_H,\boldsymbol{v}\cdot\boldsymbol{n}_i\rangle_{\Gamma_i}
-\langle g,\boldsymbol{v}\cdot\boldsymbol{n}_i\rangle_{\partial\Omega_i\cap\partial\Omega},
\label{eq:mmmfem-local-darcy}
\\
(\nabla\cdot\boldsymbol{u}_{h,i},q)_{\Omega_i}
&=
(f,q)_{\Omega_i},
\label{eq:mmmfem-local-mass}
\\
\sum_{i=1}^{m}
\langle\boldsymbol{u}_{h,i}\cdot\boldsymbol{n}_i,\mu\rangle_{\Gamma_i}
&=
0,
\label{eq:mmmfem-flux-continuity}
\end{align}
for all $\boldsymbol{v}\in\boldsymbol{V}_{h,i}$,
$q\in Q_{h,i}$, and $\mu\in M_H$.

For a given $\lambda_H$, Eqs.~\eqref{eq:mmmfem-local-darcy} and~\eqref{eq:mmmfem-local-mass} define independent mixed problems on each subdomain, with pressure data $\lambda_H$ on $\Gamma_i$ and $g$ on $\partial\Omega_i\cap\partial\Omega$. Equation~\eqref{eq:mmmfem-flux-continuity} couples the subdomains by weakly enforcing continuity of the normal flux across interfaces.

\subsection{Multiscale basis functions and the interface problem}
\label{sec:intro-mbf}

Eqs.~\eqref{eq:mmmfem-local-darcy}-\eqref{eq:mmmfem-flux-continuity} can be solved efficiently and in parallel using multiscale basis functions (MBFs). Let $\{\psi_k\}_{k=1}^{N_H}$ be a basis of the mortar space $M_H$, and write
$\lambda_H=\sum_{k=1}^{N_H}\lambda_k\psi_k$.

On each subdomain $\Omega_i$, we first compute the particular solution
$(\overline{\boldsymbol{u}}_{h,i},\overline{p}_{h,i})
\in\boldsymbol{V}_{h,i}\times Q_{h,i}$ associated with $f$ and $g$, with zero pressure on $\Gamma_i$:
\begin{align}
(K^{-1}\overline{\boldsymbol{u}}_{h,i},\boldsymbol{v})_{\Omega_i}
-
(\overline{p}_{h,i},\nabla\cdot\boldsymbol{v})_{\Omega_i}
&=
-\langle g,\boldsymbol{v}\cdot\boldsymbol{n}_i\rangle_{\partial\Omega_i\cap\partial\Omega},
&& \forall \boldsymbol{v}\in\boldsymbol{V}_{h,i},
\label{eq:mbf-particular-darcy}
\\
(\nabla\cdot\overline{\boldsymbol{u}}_{h,i},q)_{\Omega_i}
&=
(f,q)_{\Omega_i},
&& \forall q\in Q_{h,i}.
\label{eq:mbf-particular-mass}
\end{align}

For each $\psi_k$, the corresponding MBF on $\Omega_i$ is
$(\widehat{\boldsymbol{u}}_{h,i}^{(k)},\widehat{p}_{h,i}^{(k)})
\in\boldsymbol{V}_{h,i}\times Q_{h,i}$ satisfying
\begin{align}
(K^{-1}\widehat{\boldsymbol{u}}_{h,i}^{(k)},\boldsymbol{v})_{\Omega_i}
-
(\widehat{p}_{h,i}^{(k)},\nabla\cdot\boldsymbol{v})_{\Omega_i}
&=
-\langle\psi_k,\boldsymbol{v}\cdot\boldsymbol{n}_i\rangle_{\Gamma_i},
&& \forall \boldsymbol{v}\in\boldsymbol{V}_{h,i},
\label{eq:mbf-basis-darcy}
\\
(\nabla\cdot\widehat{\boldsymbol{u}}_{h,i}^{(k)},q)_{\Omega_i}
&=
0,
&& \forall q\in Q_{h,i}.
\label{eq:mbf-basis-mass}
\end{align}
Thus, each MBF solves a homogeneous local problem with $\psi_k$ as the only nonzero pressure datum on the interface.
By linearity,
\begin{equation}
\label{eq:local-solution-decomposition}
\boldsymbol{u}_{h,i}
=
\overline{\boldsymbol{u}}_{h,i}
+
\sum_{k=1}^{N_H}\lambda_k
\widehat{\boldsymbol{u}}_{h,i}^{(k)},
\qquad
p_{h,i}
=
\overline{p}_{h,i}
+
\sum_{k=1}^{N_H}\lambda_k
\widehat{p}_{h,i}^{(k)}.
\end{equation}

After computing the MBFs and particular solutions, the only global unknowns are the mortar coefficients $\{\lambda_k\}_{k=1}^{N_H}$. Substituting Eq.~\eqref{eq:local-solution-decomposition} into Eq.~\eqref{eq:mmmfem-flux-continuity} yields
\begin{align}
\sum_{k=1}^{N_H}\lambda_k
\sum_{i=1}^{m}
\left\langle
\widehat{\boldsymbol{u}}_{h,i}^{(k)}\cdot\boldsymbol{n}_i,
\mu
\right\rangle_{\Gamma_i}
&=
-
\sum_{i=1}^{m}
\left\langle
\overline{\boldsymbol{u}}_{h,i}\cdot\boldsymbol{n}_i,
\mu
\right\rangle_{\Gamma_i},
&& \forall \mu\in M_H.
\label{eq:global-mortar-mbf-system}
\end{align}

The interface matrix is SPD, provided that the mortar space is not excessively rich relative to the normal-flux trace space \cite{arbogast2000mortar}. After solving the interface problem, the local pressure and velocity fields are reconstructed independently using Eq.~\eqref{eq:local-solution-decomposition}.

\subsection{Local lowest-order Raviart-Thomas solvers}
\label{sec:intro-rt0}

Although MMMFEM allows different local mixed finite element spaces and nonmatching grids, we focus on matching grids and lowest-order Raviart-Thomas spaces, which are widely used in practice \cite{arbogast1997cellcentered}. This is the setting reproduced algebraically in Section~\ref{sec:alg-mortar}.

Let $\mathcal{E}_h$ denote the set of fine-grid interface facets on $\Gamma$. The mortar space $M_H$ is taken as a subspace of
\begin{equation*}
F(\mathcal{E}_h)
=
\left\{
\lambda:\Gamma\to\mathbb{R}
\;\middle|\;
\lambda|_e\in\mathbb{P}_0(e),
\quad \forall e\in\mathcal{E}_h
\right\}.
\end{equation*}

On each subdomain $\Omega_i$, $Q_{h,i}$ denotes the piecewise constant pressure space and $\boldsymbol{V}_{h,i}$ the lowest-order Raviart-Thomas velocity space; see \cite{guiraldello2018mrcm} for details. When $K$ is diagonal, these local mixed problems are equivalent, up to quadrature error, to a cell-centered finite-volume discretization using the two-point flux approximation (TPFA) \cite{arbogast1997cellcentered,guiraldello2018mrcm}. The fluxes are recovered from the cell pressures by finite differences \cite{guiraldello2018mrcm} and used to assemble the interface system~\eqref{eq:global-mortar-mbf-system}.

\section{The algebraic multiscale mortar method (AlgMortar)}
\label{sec:alg-mortar}
We consider the fine-grid pressure system
\begin{equation}
\label{eq:linear-system}
Ap=f,
\end{equation}
where, hereafter, $A\in\mathbb{R}^{N\times N}$ denotes an SPD matrix, $p\in\mathbb{R}^N$ the unknown vector, and $f\in\mathbb{R}^N$ the RHS.
Using only $A$ and $f$, without access to the grid, discretization, or PDE coefficients, we now describe the construction of AlgMortar.
Our construction ensures that, when $A$ arises from a TPFA discretization on a uniform Cartesian grid with homogeneous permeability $K=I$, where $I$ denotes the identity tensor, AlgMortar exactly recovers MMMFEM.

\subsection{Graph partition}
\label{sec:graph-partition}

We represent the computational domain by the index set $\Omega=\{1,\ldots,N\}$, whose elements correspond to the unknowns of Eq.~\eqref{eq:linear-system}. From $A$, we define the adjacency graph $G(A)=(\Omega,E)$, where $(k,l)\in E$ if $k \neq l$ and $a_{kl}\neq0$. The graph is then partitioned into $m$ non-overlapping subdomains, $\Omega=\bigcup_{i=1}^m\Omega_i$, using a graph partitioner such as METIS \cite{karypis1997metis}. This yields an algebraic domain decomposition based only on the connectivity pattern of $A$, as commonly used in domain decomposition \cite{dolean2015ddm,vecharynski2014partitioning} and algebraic multiscale methods \cite{moyner2014tpfa,jaramillo2025porenetwork,zhou2026algebraicmultiscalepreconditionerlarge,liu2026graphbased}.

For $i\neq j$, we write $i\sim j$ when $\Omega_i$ and $\Omega_j$ share an interface, that is, when $a_{kl}\neq0$ for some $k\in\Omega_i$ and $l\in\Omega_j$. We then reorder the unknowns by subdomain, placing those associated with $\Omega_1,\Omega_2,\ldots,\Omega_m$ consecutively, and let $P$ denote the corresponding permutation matrix.
The permuted system is
\begin{equation}
\label{eq:linear-system-perm}
\widetilde{A}\widetilde{p}=\widetilde{f},
\qquad
\widetilde{A}=PAP^T,\quad
\widetilde{p}=Pp,\quad
\widetilde{f}=Pf.
\end{equation}

Note that permutation preserves symmetry and positive definiteness. For simplicity, we henceforth drop the tildes and work with the permuted quantities. For $i,j=1,\ldots,m$, let $A_{ij}\in\mathbb{R}^{N_i\times N_j}$ denote the submatrix of $A$ with rows indexed by $\Omega_i$ and columns indexed by $\Omega_j$, where $N_i=|\Omega_i|$. Thus,  $A_{ij}\neq0$ if and only if $i\sim j$.

For each pair $i \sim j$, we define the set of
interface edges by
\begin{equation*}
\label{eq:Eij}
\mathcal{E}_{ij}
=
\left\{
(k,l)
\;:\;
k \in \Omega_i,\;
l \in \Omega_j,\;
a_{kl} \neq 0
\right\}.
\end{equation*}

Since $A$ is symmetric, $(k,l)\in\mathcal{E}_{ij}$ if and only if $(l,k)\in\mathcal{E}_{ji}$. We identify them as the same edge and order $\mathcal{E}_{ij}$ and $\mathcal{E}_{ji}$ consistently.
We define the local and global interface edge sets, forming the algebraic skeleton, by
\begin{equation}
\label{eq:E}
\mathcal{E}_i
=
\bigcup_{i\sim j}\mathcal{E}_{ij},
\qquad
\mathcal{E}
=
\bigcup_{\substack{i<j\\ i\sim j}}\mathcal{E}_{ij}.
\end{equation}

\subsection{Algebraic operators}
\label{sec:alg-operators}
We now introduce the operators used in the algebraic formulation. For finite index sets $X$ and $Y$, let $\mathbb{R}^{X}$ denote the space of real vectors indexed by $X$, and $\mathbb{R}^{X\times Y}$ the space of real matrices with rows indexed by $X$ and columns indexed by $Y$.

\subsubsection{Incidence matrices}
We introduce the incidence matrix restricted to the interface edges
$\mathcal{E}_{ij}$,
$C_{ij} \in \mathbb{R}^{\Omega_i \times \mathcal{E}_{ij}}$, whose entries are given by
\begin{equation}
[C_{ij}]_{k,e}
=
\begin{cases}
1, & \text{if } e \in \mathcal{E}_{ij} \text{ is incident to } k \in \Omega_i,\\
0, & \text{otherwise}.
\end{cases}
\label{eq:incidence-matrix}
\end{equation}

Thus, each row $k \in \Omega_i$ has nonzero entries only in columns corresponding
to edges $e=(k,l)$ with $l \in \Omega_j$. Moreover, for each edge $e=(k,l)\in\mathcal{E}_{ij}$, the corresponding column of $C_{ij}$ contains a single nonzero entry equal to one, since only one endpoint belongs to $\Omega_i$. Hence, for any $\mu\in\mathbb{R}^{\mathcal{E}_{ij}}$ and $k\in\Omega_i$,
\begin{equation}
(C_{ij}\mu)_k
=
\sum_{\substack{l:(k,l)\in\mathcal{E}_{ij}}}\mu(k,l),
\label{eq:Cij-action}
\end{equation}
so $(C_{ij}\mu)_k$ collects the contributions of all edges in $\mathcal{E}_{ij}$ incident to $k$.
Additionally,  for any
$p_{\Omega_i} \in \mathbb{R}^{\Omega_i}$ and $e=(k,l) \in \mathcal{E}_{ij}$, with
$k \in \Omega_i$ and $l \in \Omega_j$, we have
\begin{equation}
(C_{ij}^T p_{\Omega_i})_e = (p_{\Omega_i})_k.
\label{eq:Ct-ij-action}
\end{equation}
Thus, $(C_{ij}^T p_{\Omega_i})_e$ assigns to each  $e=(k,l)\in\mathcal{E}_{ij}$ the
value of $p_{\Omega_i}$ at its endpoint in $\Omega_i$.

\subsubsection{Restriction matrices}

We denote by
$R_{\mathcal{E}_{ij}} \in \mathbb{R}^{\mathcal{E}_{ij} \times \mathcal{E}}$
the restriction operator from $\mathcal{E}$ to $\mathcal{E}_{ij}$, and by
$R_{\mathcal{E}_{ij}}^T \in \mathbb{R}^{\mathcal{E} \times \mathcal{E}_{ij}}$
the extension-by-zero operator from $\mathcal{E}_{ij}$ to $\mathcal{E}$.
These operators satisfy
\begin{equation}
R_{\mathcal{E}_{ij}} R_{\mathcal{E}_{ij}}^T
=
I_{\mathcal{E}_{ij}},
\label{eq:R-properties1}
\end{equation}
where $I_{\mathcal{E}_{ij}}$ denotes the identity matrix on
$\mathbb{R}^{\mathcal{E}_{ij}}$. Moreover, since the sets
$\mathcal{E}_{ij}$ are disjoint, we have
\begin{equation}
R_{\mathcal{E}_{ij}} R_{\mathcal{E}_{rs}}^T
=
0,
\quad \text{for } (i,j) \neq (r,s).
\label{eq:R-properties2}
\end{equation}

In addition, since the sets $\mathcal{E}_{ij}$ and $\mathcal{E}_{ji}$ are
identified and ordered consistently, we also identify their corresponding
restriction operators, that is,
$R_{\mathcal{E}_{ij}} = R_{\mathcal{E}_{ji}}$.

Finally, any $p \in \mathbb{R}^{\Omega}$ admits the decomposition
\begin{equation*}
p
=
(p_{\Omega_1}, \dots, p_{\Omega_m})^T,
\qquad
p_{\Omega_i} \in \mathbb{R}^{\Omega_i}.
\label{eq:p-decomp}
\end{equation*}
Here, $p_{\Omega_i} = R_{\Omega_i}p$, where
$R_{\Omega_i} \in \mathbb{R}^{\Omega_i \times \Omega}$ is the restriction matrix from
$\mathbb{R}^{\Omega}$ to $\mathbb{R}^{\Omega_i}$.

\subsubsection{Trace, averaging and coupling matrices}

We introduce the diagonal weight matrix
$W_{ij} \in \mathbb{R}^{\mathcal{E}_{ij} \times \mathcal{E}_{ij}}$
associated with the interface $\mathcal{E}_{ij}$ by
\begin{equation*}
[W_{ij}]_{e,e} = a_{kl},
\qquad \text{for } e=(k,l) \in \mathcal{E}_{ij}.
\label{eq:weight-matrix-ij}
\end{equation*}

It is clear that $W_{ij}=W_{ji}$.
The global weight matrix is obtained by assembling all interface contributions,
\begin{equation}
W
=
\sum_{i<j} R_{\mathcal{E}_{ij}}^T \, W_{ij} \, R_{\mathcal{E}_{ij}}
\;\in\; \mathbb{R}^{\mathcal{E} \times \mathcal{E}}.
\label{eq:global-W}
\end{equation}
We assume that the diagonal entries of $W$, corresponding to intersubdomain couplings, are negative, as is the case for $M$-matrices.

We define the oriented trace operators on the interface by
\begin{align}
L &:= \sum_{i<j} R_{\mathcal{E}_{ij}}^{T} C_{ij}^{T} R_{\Omega_i},
\label{eq:L-operator} \\
U &:= \sum_{i<j} R_{\mathcal{E}_{ij}}^{T} C_{ji}^{T} R_{\Omega_j}.
\label{eq:U-operator}
\end{align}

The operators $L$ and $U$ map $p\in\mathbb{R}^{\Omega}$ to $\mathbb{R}^{\mathcal{E}}$, assigning to each edge the value of $p$ at the endpoint in the lower- and higher-indexed subdomains, respectively. The following identities hold:
\begin{align}
p^T U^T W L p
&=
\sum_{(k,l)\in\mathcal{E}} a_{kl}\,p_k p_l,
\label{eq:UWL-identity} \\
p^T L^T W L p
&=
\sum_{(k,l)\in\mathcal{E}} a_{kl}\,p_k^2,
\label{eq:LWL-identity} \\
p^T U^T W U p
&=
\sum_{(k,l)\in\mathcal{E}} a_{kl}\,p_l^2.
\label{eq:UWU-identity}
\end{align}

The averaging operator can be defined as
\begin{equation}
M = \frac{1}{2}\left(L + U\right).
\label{eq:avg-operator}
\end{equation}

It is easy to verify that
\begin{equation}
(M p)_e = \frac{p_k + p_l}{2}, 
\quad \text{for } e = (k,l) \in \mathcal{E}.
\label{eq:global-edge-repr}
\end{equation}

The interface coupling operators $B_{ij} \in \mathbb{R}^{\Omega_i \times \mathcal{E}}$
are given by
\begin{equation}
B_{ij} = 2\,C_{ij} W_{ij} R_{\mathcal{E}_{ij}}.
\label{eq:Bij-matrix-form}
\end{equation}

It follows that, for any $\mu \in \mathbb{R}^{\mathcal{E}}$ and $k \in \Omega_i$,
we have
\begin{equation}
(B_{ij} \mu)_k 
= 2 \sum_{l:(k,l) \in \mathcal{E}_{ij}} a_{kl}\, \mu(k,l).
\label{eq:Bij-contrib}
\end{equation}

Thus, $B_{ij}$ assembles the weighted interface-edge contributions from $\Omega_j$ to $\Omega_i$. Summing over neighboring subdomains gives the total interface contribution to $\Omega_i$:
\begin{equation}
B_i := \sum_{j:j \sim i} B_{ij}.
\label{eq:Bi-total-contrib}
\end{equation}

Finally, we define the stacked matrix $B \in \mathbb{R}^{\Omega \times \mathcal{E}}$
by
\begin{equation}
B := (B_1^T,\dots,B_m^T)^T,
\label{eq:def-B}
\end{equation}
and prove the following proposition, which will be used later.

\begin{proposition}\label{proposition-BWM}
Let $B$ and $M$ be defined as in Eqs.~\eqref{eq:def-B} and
\eqref{eq:avg-operator}, respectively, and let $W$ be the global weight matrix
defined in Eq.~\eqref{eq:global-W}. Then:
\begin{equation}
 B^T 
=
4\, W M.
\label{eq:proposition-BWM}
\end{equation}
\end{proposition}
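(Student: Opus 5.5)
The plan is to prove $B^T = 4WM$ by writing both sides as sums of the same block terms and matching them. Since both sides map $\mathbb{R}^{\Omega}$ to $\mathbb{R}^{\mathcal{E}}$, I will work with the block representation $B^T = \sum_{i} B_i^T R_{\Omega_i}$. This holds because $B$ stacks the blocks $B_i$ row-wise by subdomain, so $B = \sum_i R_{\Omega_i}^T B_i$.

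First, I expand $B^T$. By Eqs.~\eqref{eq:Bi-total-contrib} and \eqref{eq:Bij-matrix-form},
\begin{equation*}
B^T=\sum_{i}\sum_{j\sim i} 2\,R_{\mathcal{E}_{ij}}^T W_{ij} C_{ij}^T R_{\Omega_i},
\end{equation*}
using that $W_{ij}$ is diagonal and hence symmetric. The double sum runs over ordered pairs $(i,j)$ with $i\sim j$. I split it into the terms with $i<j$ and those with $i>j$. In the second group I swap the labels $i\leftrightarrow j$, which produces terms $2R_{\mathcal{E}_{ji}}^T W_{ji} C_{ji}^T R_{\Omega_j}$ with $i<j$. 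By the identifications $R_{\mathcal{E}_{ji}}=R_{\mathcal{E}_{ij}}$ and $W_{ji}=W_{ij}$, this gives
\begin{equation*}
B^T = 2\sum_{i<j} R_{\mathcal{E}_{ij}}^T W_{ij}\big(C_{ij}^T R_{\Omega_i}+C_{ji}^T R_{\Omega_j}\big).
\end{equation*}

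Next, I expand $4WM = 2W(L+U)$ using Eqs.~\eqref{eq:global-W}, \eqref{eq:L-operator} and \eqref{eq:U-operator}. The product $W L$ is a double sum over pairs $(i,j)$ and $(r,s)$ with $i<j$, $r<s$, containing the factor $R_{\mathcal{E}_{ij}}^T W_{ij} R_{\mathcal{E}_{ij}} R_{\mathcal{E}_{rs}}^T C_{rs}^T R_{\Omega_r}$. By the orthogonality relations \eqref{eq:R-properties1}--\eqref{eq:R-properties2}, only the diagonal terms $(r,s)=(i,j)$ survive, and $R_{\mathcal{E}_{ij}}R_{\mathcal{E}_{ij}}^T=I$. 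Hence $WL=\sum_{i<j}R_{\mathcal{E}_{ij}}^T W_{ij} C_{ij}^T R_{\Omega_i}$, and likewise $WU=\sum_{i<j}R_{\mathcal{E}_{ij}}^T W_{ij} C_{ji}^T R_{\Omega_j}$. Multiplying $WL+WU$ by $2$ reproduces exactly the expression for $B^T$ above, which proves the identity.

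The only delicate step is the reindexing of the $i>j$ terms. There one must use the convention that $\mathcal{E}_{ij}$ and $\mathcal{E}_{ji}$ are the same ordered edge set, so that $C_{ji}$ is indexed by $\mathcal{E}_{ij}$ and the products are conformable. As a cross-check, I would verify the identity entrywise using Eqs.~\eqref{eq:Bij-contrib} and \eqref{eq:global-edge-repr}. For an edge $e=(k,l)$ with $k\in\Omega_i$, $l\in\Omega_j$, the row $e$ of $B^T$ applied to $p$ gives $2a_{kl}(p_k+p_l)$, and $4(WMp)_e = 4a_{kl}\,\tfrac{p_k+p_l}{2}$ gives the same value.
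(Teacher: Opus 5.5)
Your proof is correct and follows essentially the same route as the paper: both rewrite $B^T$ as $2\sum_{i<j} R_{\mathcal{E}_{ij}}^T W_{ij}\bigl(C_{ij}^T R_{\Omega_i}+C_{ji}^T R_{\Omega_j}\bigr)$ by regrouping ordered pairs with $R_{\mathcal{E}_{ij}}=R_{\mathcal{E}_{ji}}$ and $W_{ij}=W_{ji}$, and then use Eqs.~\eqref{eq:R-properties1}--\eqref{eq:R-properties2} to identify this sum with $2W(L+U)$. The only cosmetic difference is that the paper applies both sides to a vector $p$ and collapses the sum through $v=(L+U)p$ and its restrictions $R_{\mathcal{E}_{ij}}v$, whereas you expand $WL$ and $WU$ directly at the matrix level.
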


\begin{proof}
Let $p \in \mathbb{R}^{\Omega}$. Using the definition of $B_i$ from
Eq.~\eqref{eq:Bi-total-contrib}, we write
\begin{equation*}
B^Tp
=
\sum_{i=1}^m B_i^T p_{\Omega_i}
=
\sum_{i=1}^m \sum_{j:j\sim i} B_{ij}^T p_{\Omega_i}.
\end{equation*}

Reordering the sum over unordered pairs $(i,j)$ with $i<j$, we obtain
\begin{equation*}
\sum_{i=1}^m B_i^T p_{\Omega_i}
=
\sum_{i<j} \left( B_{ij}^T p_{\Omega_i} + B_{ji}^T p_{\Omega_j} \right)
=
\sum_{i<j} \left( B_{ij}^T R_{\Omega_i} + B_{ji}^T R_{\Omega_j}\right)p.
\end{equation*}

Using the definition of $B_{ij}$ from Eq.~\eqref{eq:Bij-matrix-form}, we have
\begin{equation*}
B_{ij}^T = 2\, R_{\mathcal{E}_{ij}}^T W_{ij} C_{ij}^T,
\end{equation*}
which yields
\begin{equation*}
B_{ij}^T R_{\Omega_i} + B_{ji}^T R_{\Omega_j}
=
2\, R_{\mathcal{E}_{ij}}^T W_{ij}
\left(C_{ij}^T R_{\Omega_i} + C_{ji}^T R_{\Omega_j}\right),
\end{equation*}
where we used that $R_{\mathcal{E}_{ij}} = R_{\mathcal{E}_{ji}}$ and
$W_{ij}=W_{ji}$.
Summing over $i<j$, we obtain
\begin{align}
\sum_{i=1}^m B_i^T p_{\Omega_i}
&=
2 \sum_{i<j} R_{\mathcal{E}_{ij}}^T W_{ij}
\left( C_{ij}^T R_{\Omega_i} + C_{ji}^T R_{\Omega_j} \right)p.
\label{eq:sumBij-rig-1}
\end{align}

We now introduce the vector
\begin{equation*}
v :=
\sum_{r<s} R_{\mathcal{E}_{rs}}^T
\left( C_{rs}^T R_{\Omega_r} + C_{sr}^T R_{\Omega_s} \right)p
\;\in\; \mathbb{R}^{\mathcal{E}}.
\label{eq:def-v}
\end{equation*}

Using Eqs.~\eqref{eq:R-properties1} and \eqref{eq:R-properties2},
\begin{equation}
R_{\mathcal{E}_{ij}} v
=
\left( C_{ij}^T R_{\Omega_i}  + C_{ji}^T R_{\Omega_j}  \right)p.
\label{eq:restriction-v}
\end{equation}

Therefore, using Eq.~\eqref{eq:restriction-v} and Eq.~\eqref{eq:R-properties1}
in Eq.~\eqref{eq:sumBij-rig-1}, we obtain
\begin{align}
\sum_{i=1}^m B_i^T p_{\Omega_i}
&=
2 \sum_{i<j} R_{\mathcal{E}_{ij}}^T W_{ij} R_{\mathcal{E}_{ij}} v
=
2\, W v,
\label{eq:sumBij-rig-2}
\end{align}
where we used the definition of $W$ in Eq.~\eqref{eq:global-W}. Finally, using Eq.~\eqref{eq:avg-operator}, we have
\begin{equation*}
v = 2\, M p,
\end{equation*}
and hence
\begin{equation*}
\sum_{i=1}^m B_i^T p_{\Omega_i}
=
4\, W M p.
\end{equation*}
Since $p \in \mathbb{R}^{\Omega}$ is arbitrary, the desired identity follows.
\end{proof}

\subsection{The AlgMortar method}
\label{sec:alg-local-problems}

We now introduce AlgMortar. Restricting the global system to each subdomain $\Omega_i$ gives
\begin{align}
A_{ii}p_{\Omega_i}
+\sum_{j:\,j\sim i}A_{ij}p_{\Omega_j}
&=f_{\Omega_i},
\qquad i=1,\ldots,m.
\label{eq:loc-system1}
\end{align}

To represent the neighboring contributions on the interfaces, define
\begin{equation}
D_{ij}
:=
\operatorname{diag}\!\left(A_{ij}\mathbf{1}_{\Omega_j}\right),
\label{eq:Dij-def}
\end{equation}
where $\mathbf{1}_{\Omega_j}$ is the vector of ones in
$\mathbb{R}^{\Omega_j}$. Equation~\eqref{eq:loc-system1} can then be written as
\begin{equation}
\left(A_{ii}-\sum_{j:\,j\sim i}D_{ij}\right)p_{\Omega_i}
=
f_{\Omega_i}
-
\sum_{j:\,j\sim i}
\left(A_{ij}p_{\Omega_j}+D_{ij}p_{\Omega_i}\right).
\label{eq:local-system3}
\end{equation}

For a degree of freedom $k\in\Omega_i$, the definition of $\mathcal{E}_{ij}$ gives
\begin{align}
\big(A_{ij}p_{\Omega_j}\big)_k
&=
\sum_{l\in\Omega_j}a_{kl}p_l
=
\sum_{l:\,(k,l)\in\mathcal{E}_{ij}}a_{kl}p_l,
\label{eq:edge-contrib-A}
\\
\big(D_{ij}p_{\Omega_i}\big)_k
&=
\left(\sum_{l\in\Omega_j}a_{kl}\right)p_k
=
\sum_{l:\,(k,l)\in\mathcal{E}_{ij}}a_{kl}p_k.
\label{eq:edge-contrib-D}
\end{align}
Combining them and using Equation~\eqref{eq:global-edge-repr}, we obtain
\begin{equation}
\big(A_{ij}p_{\Omega_j}+D_{ij}p_{\Omega_i}\big)_k
=
2\sum_{l:\,(k,l)\in\mathcal{E}_{ij}}
a_{kl}(Mp){(k,l)},
\qquad k\in\Omega_i.
\label{eq:Bij-combine}
\end{equation}

It follows from Eq.~\eqref{eq:Bij-contrib} that
\begin{equation}
B_{ij}Mp
= A_{ij}\, p_{\Omega_j} + D_{ij}\, p_{\Omega_i},
\label{eq:Bij-final}
\end{equation}
where the operator $B_{ij}$ is defined in Eq.~\eqref{eq:Bij-matrix-form}.

The local modified Dirichlet operator on $\Omega_i$ is defined by
\begin{equation}
A_i
:=
A_{ii}
-
\sum_{j:\,j\sim i}D_{ij}.
\label{eq:A_i}
\end{equation}
If $A$ is symmetric positive definite and the diagonal entries of $W$ are negative, then each $A_i$ is also symmetric positive definite.

\begin{remark}
\label{rem:tpfa-local-dirichlet}
If $A$ arises from a global TPFA discretization on a uniform Cartesian grid with $K=I$, the modification in Eq.~\eqref{eq:A_i} makes $A_i$ coincide with the TPFA operator for the local Dirichlet problem on $\Omega_i$. Similar diagonal modifications have been used in other works to impose physical boundary conditions \cite{moyner2014tpfa,akbari2020robinalgebraic,zhou2026algebraicmultiscalepreconditionerlarge,liu2026graphbased}.
\end{remark}

Let $\widehat{A} := \mathrm{diag}(A_1,\dots,A_m) \in \mathbb{R}^{\Omega \times \Omega}$.
Then, using Eqs.~\eqref{eq:Bi-total-contrib}, \eqref{eq:Bij-final}, and
\eqref{eq:A_i} in Eq.~\eqref{eq:local-system3}, the global linear system
(Eq.~\eqref{eq:loc-system1}) can be equivalently expressed as
\begin{subequations}
\label{eq:local-system-rewrite}
\begin{align}
\widehat{A} \, p + B \, \lambda &= f,
\label{eq:local-system-rewrite-a}\\
\lambda &= M p.
\label{eq:local-system-rewrite-b}
\end{align}
\end{subequations}

Up to this point, no approximation has been introduced; Eq.~\eqref{eq:loc-system1} has only been rewritten in terms of local variables and the interface contribution $\lambda$.
We now introduce the mortar space $\Lambda\subset\mathbb{R}^{\mathcal{E}}$ and define AlgMortar by enforcing Eq.~\eqref{eq:local-system-rewrite-b} weakly on $\Lambda$, with weights given by $W$.

The AlgMortar method then considers the following
approximation of the original problem: find
$(p,\lambda) \in \mathbb{R}^{\Omega} \times \Lambda$ such that
\begin{subequations}
\label{eq:local-system-lm}
\begin{align}
\widehat{A} \, p+ B \, \lambda
&= f,
\label{eq:local-system-lm-a}
\\
\mu^T W(\lambda - Mp)
&= 0, \qquad \forall \mu \in \Lambda.
\label{eq:local-system-lm-b}
\end{align}
\end{subequations}

As shown next, Eq.~\eqref{eq:local-system-lm} has a unique solution when $A$ is symmetric positive definite and $W$ is negative definite. If $\Lambda=\mathbb{R}^{\mathcal{E}}$, the method recovers the original system~\eqref{eq:local-system-rewrite}. The interface problem~\eqref{eq:local-system-lm-b} can be solved efficiently using the MBFs.

\begin{remark}
\label{rem:algebraic-flux-continuity}
Using Eqs.~\eqref{eq:global-W} and~\eqref{eq:avg-operator}, Eq.~\eqref{eq:local-system-lm-b} becomes
\begin{equation*}
\sum_{(k,l)\in\mathcal{E}}
a_{kl}
\left[
\lambda(k,l)-p_k
-
\bigl(p_l-\lambda(k,l)\bigr)
\right]
\mu(k,l)
=
0,
\qquad \forall\mu\in\Lambda.
\end{equation*}
This is a weak flux-continuity condition when $p$ and $\lambda$ represent cell and interface pressures and $a_{kl}$ is a transmissibility coefficient. For a global TPFA discretization on a Cartesian grid with $K=I$, it coincides with the MMMFEM weak flux condition (Eq.~\eqref{eq:mmmfem-flux-continuity}).
\end{remark}

\begin{remark}
\label{rem:algebraic-recovery-mmmfem-rt0}
Remarks~\ref{rem:algebraic-flux-continuity} and~\ref{rem:tpfa-local-dirichlet} show that, when $A$ arises from a global TPFA discretization on a uniform Cartesian grid with $K=I$, AlgMortar exactly recovers MMMFEM.
\end{remark}

\subsection{Multiscale basis functions and the interface problem}
\label{sec:alg-mortar-interface-problem}

Before establishing well-posedness, we follow Subsection~\ref{sec:intro-mbf} and solve AlgMortar using MBFs. For each subdomain $\Omega_i$, the particular solution satisfies
\begin{equation}
A_i\overline{p}_{\Omega_i}=f_{\Omega_i}.
\label{eq:def-pbar-local}
\end{equation}
Let $\{\psi_m\}_{m=1}^{N_{\Lambda}}$ be a basis of $\Lambda$. The basis functions active on $\Omega_i$ are indexed by
$\mathcal{I}_i =
\left\{
m\in\{1,\ldots,N_{\Lambda}\}:
\operatorname{supp}(\psi_m)\cap\mathcal{E}_i\neq\emptyset
\right\}$.
Then, for each $m \in \mathcal{I}_i$, we compute the corresponding local MBF by solving
\begin{equation}
A_i\, \widehat{p}_{\Omega_i}^{(m)} = -\,B_i\, \psi_m.
\label{eq:def-phat-local}
\end{equation}

Thus, the solution $p$ of Eq.~\eqref{eq:local-system-lm} admits the decomposition
\begin{equation}
p=\overline{p}+\widehat{p}(\lambda),
\label{eq:p-decomposition-alg}
\end{equation}
where
\begin{equation}
\widehat{p}(\lambda)
=
\sum_{k=1}^{N_{\Lambda}}\lambda_k\widehat{p}(\psi_k)
=
-\widehat{A}^{-1}B\lambda.
\label{eq:phat-expansion}
\end{equation}

Using Eq.~\eqref{eq:p-decomposition-alg}, the interface condition
\eqref{eq:local-system-lm-b} becomes
\begin{equation}
\mu^T W\bigl(M\widehat{p}(\lambda)-\lambda\bigr)
=
-\mu^T WM\overline{p},
\qquad \forall\mu\in\Lambda.
\label{eq:interface-problem-alg-0}
\end{equation}

Substituting Eq.~\eqref{eq:phat-expansion} gives
\begin{equation}
\mu^T\left(WM\widehat{A}^{-1}B+W\right)\lambda
=
\mu^T WM\overline{p},
\qquad \forall\mu\in\Lambda.
\label{eq:interface-problem-alg}
\end{equation}

To solve Eq.~\eqref{eq:local-system-lm}, the particular solutions and MBFs are computed independently on each subdomain, allowing parallel factorization of each $A_i$ and reuse of its Cholesky factors for all local solves. The MBFs are then used to assemble the interface system by testing Eq.~\eqref{eq:interface-problem-alg-0} with the basis functions of $\Lambda$; after solving it, the global solution is reconstructed through Eq.~\eqref{eq:p-decomposition-alg}.
We typically assume that each $\psi_m$ is supported on a single interface $\mathcal{E}_{ij}$. Thus, each subdomain computes only the MBFs associated with neighboring interfaces, reducing local work and yielding a sparse interface system.

\section{Well-posedness analysis of the AlgMortar method}
\label{sec:well-posed}

In this section, we address the existence and uniqueness of the AlgMortar method by showing that Eq.~\eqref{eq:local-system-lm} admits a unique solution under suitable assumptions. To this end, we first rewrite Eq.~\eqref{eq:local-system-lm} as a symmetric linear system and then prove that the associated matrix is positive definite. This implies the existence and uniqueness of AlgMortar.

\begin{proposition}\label{prop:simetry}
Consider the framework of the AlgMortar method given in Eq. \eqref{eq:local-system-lm}.
Let $\Lambda \subset \mathbb{R}^{\mathcal{E}}$ be a subspace with $\dim(\Lambda)=r$, and let $Z \in \mathbb{R}^{|\mathcal{E}| \times r}$ satisfy $\mathrm{range}(Z)=\Lambda$.
Let
\begin{equation}\label{eq:Q-def}
Q :=
\begin{pmatrix}
\frac{1}{4}\widehat{A} & M^T W Z \\
Z^T W M & -\, Z^T W Z
\end{pmatrix}.
\end{equation}
Then, the Eq. \eqref{eq:local-system-lm} is equivalent to
\begin{equation*}
Q
\begin{pmatrix}
p \\
\eta
\end{pmatrix}
=
\begin{pmatrix}
\frac{1}{4}f \\
0
\end{pmatrix},
\label{eq:symmetric-system}
\end{equation*}
where $p \in \mathbb{R}^{\Omega}$, $\eta \in \mathbb{R}^r$.
\end{proposition}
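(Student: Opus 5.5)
Since $Z$ has full column rank $r$ and $\mathrm{range}(Z)=\Lambda$, every $\lambda\in\Lambda$ can be written uniquely as $\lambda = Z\xi$ with $\xi\in\mathbb{R}^r$. Every test function $\mu\in\Lambda$ can likewise be written as $\mu = Z\nu$ with $\nu\in\mathbb{R}^r$ arbitrary. The plan is to substitute these parametrizations into Eq.~\eqref{eq:local-system-lm} and then rescale the multiplier to symmetrize the off-diagonal blocks. The rescaling is read off from Proposition~\ref{proposition-BWM}.

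\textbf{Step 1 (interface equation).} Substituting $\lambda=Z\xi$ and $\mu=Z\nu$ into Eq.~\eqref{eq:local-system-lm-b} gives $\nu^T Z^T W(Z\xi - Mp)=0$ for all $\nu\in\mathbb{R}^r$. This is equivalent to
\[
Z^T W M p - Z^T W Z\xi = 0 .
\]
This is already the second block row of $Q$ applied to $(p,\xi)$, with right-hand side $0$.

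\textbf{Step 2 (local equations).} By Proposition~\ref{proposition-BWM}, $B = 4 M^T W^T = 4M^T W$, where $W$ is diagonal and hence symmetric. Eq.~\eqref{eq:local-system-lm-a} therefore becomes $\widehat{A}p + 4M^TWZ\xi = f$, or after dividing by four,
\[
\tfrac14\widehat{A}p + M^TWZ\xi = \tfrac14 f .
\]
This is the first block row of $Q$.

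\textbf{Step 3 (conclusion).} Setting $\eta=\xi$, the pair $(p,\lambda)$ solves Eq.~\eqref{eq:local-system-lm} if and only if $(p,\eta)$ solves $Q(p,\eta)^T = (\tfrac14 f,0)^T$. The correspondence is $\lambda = Z\eta$, which is a bijection between $\mathbb{R}^r$ and $\Lambda$ because $Z$ is injective. Each of the steps above is reversible:
\begin{itemize}
\item dividing by $4$ is invertible;
\item the quantifier ``for all $\nu\in\mathbb{R}^r$'' is equivalent to ``for all $\mu\in\Lambda$'' because $Z$ maps $\mathbb{R}^r$ onto $\Lambda$.
\end{itemize}
Hence the two formulations are equivalent.

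\textbf{Main obstacle.} The only delicate point is injectivity of $Z$. The hypotheses $\mathrm{range}(Z)=\Lambda$, $\dim\Lambda=r$ and $Z\in\mathbb{R}^{|\mathcal{E}|\times r}$ give $\operatorname{rank} Z = r$, so $Z$ has full column rank and $\eta$ is uniquely determined by $\lambda$. I would state this explicitly. With it in hand, the symmetry of $Q$ is immediate, since its off-diagonal blocks $M^TWZ$ and $Z^TWM$ are transposes of each other because $W^T=W$.
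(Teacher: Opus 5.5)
Your proposal is correct and takes essentially the same route as the paper: write $\lambda = Z\eta$ and $\mu = Z\xi$, use Proposition~\ref{proposition-BWM} to replace $B$ by $4M^TW$ in the local equations, and test the interface condition over all of $\mathrm{range}(Z)$ to obtain $Z^TWZ\eta = Z^TWMp$. Your explicit note that $\operatorname{rank} Z = r$ makes $\lambda \leftrightarrow \eta$ a bijection is a small tightening that the paper leaves implicit.
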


\begin{proof}
Since $\Lambda = \mathrm{range}(Z)$, any $\lambda \in \Lambda$ can be written as $\lambda = Z\eta$ for some $\eta \in \mathbb{R}^r$.
Then Eq. \eqref{eq:local-system-lm-a} is equivalent to
\begin{equation*}
\widehat{A} p + 4M^TWZ\,\eta = f,
\end{equation*}
where we used Proposition \ref{proposition-BWM}.

Next, we take $\mu = Z\xi$ with arbitrary $\xi \in \mathbb{R}^r$ and set $\lambda = Z\eta$. Substituting into \eqref{eq:local-system-lm-b}, we obtain
\begin{equation*}
Z^T W Z\,\eta = Z^T W M p.
\label{eq:reduced-lm}
\end{equation*}

Combining these two equations yields the desired result.
\end{proof}

We prove the following two lemmas, which will be used later to establish the positive definiteness of $Q$.

\begin{lemma}
\label{lem:block-expansion}
Let $A$ be symmetric. Then, for any $p\in\mathbb{R}^{\Omega}$,
\begin{equation*}
p^TAp
=
\sum_{i=1}^m
p_{\Omega_i}^TA_{ii}p_{\Omega_i}
+
2p^TU^TWLp.
\end{equation*}
\end{lemma}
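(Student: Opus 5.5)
We split the quadratic form into diagonal and off-diagonal blocks and then read the off-diagonal part through the identity \eqref{eq:UWL-identity}. Writing $p=(p_{\Omega_1},\dots,p_{\Omega_m})^T$ in the permuted ordering, we have
\begin{equation*}
p^TAp=\sum_{i=1}^m p_{\Omega_i}^TA_{ii}p_{\Omega_i}+\sum_{i\neq j}p_{\Omega_i}^TA_{ij}p_{\Omega_j}.
\end{equation*}
Since $A$ is symmetric, $A_{ji}=A_{ij}^T$, so $p_{\Omega_j}^TA_{ji}p_{\Omega_i}=p_{\Omega_i}^TA_{ij}p_{\Omega_j}$. The off-diagonal part is therefore $2\sum_{i<j}p_{\Omega_i}^TA_{ij}p_{\Omega_j}$. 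In this sum only pairs with $i\sim j$ contribute, because $A_{ij}=0$ otherwise.

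Next we expand each block entrywise:
\begin{equation*}
p_{\Omega_i}^TA_{ij}p_{\Omega_j}=\sum_{k\in\Omega_i}\sum_{l\in\Omega_j}a_{kl}p_kp_l=\sum_{(k,l)\in\mathcal{E}_{ij}}a_{kl}p_kp_l.
\end{equation*}
This holds because the nonzero entries $a_{kl}$ with $k\in\Omega_i$, $l\in\Omega_j$ are exactly the edges in $\mathcal{E}_{ij}$; zero entries contribute nothing. Summing over $i<j$ gives $\sum_{(k,l)\in\mathcal{E}}a_{kl}p_kp_l$, since $\mathcal{E}$ is by \eqref{eq:E} the disjoint union of the $\mathcal{E}_{ij}$ with $i<j$. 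By \eqref{eq:UWL-identity} this sum equals $p^TU^TWLp$, and the claim follows.

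The only step needing care is \eqref{eq:UWL-identity} itself, in case it must be justified rather than cited. To verify it, note that $Lp$ and $Up$ assign to the edge $e=(k,l)\in\mathcal{E}_{ij}$, $i<j$, the values $p_k$ and $p_l$ respectively. This follows from \eqref{eq:Ct-ij-action} together with \eqref{eq:R-properties1}--\eqref{eq:R-properties2}: the restrictions to different interfaces do not interfere. Since $W$ is diagonal with entry $a_{kl}$ at $e$, we get $(Up)^TW(Lp)=\sum_e a_{kl}p_lp_k$. The orientation convention ($k$ in the lower-indexed subdomain) is consistent with how $\mathcal{E}$ is built, so the factor of two and the absence of double counting are the points to check.
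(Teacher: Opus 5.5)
Your proposal is correct and follows the paper's proof essentially step for step. The paper also expands blockwise, uses symmetry to get $2\sum_{i<j}$, reduces each block entrywise to a sum over $\mathcal{E}_{ij}$, and then invokes \eqref{eq:UWL-identity}. Your added check of \eqref{eq:UWL-identity} via \eqref{eq:Ct-ij-action} and \eqref{eq:R-properties1}--\eqref{eq:R-properties2} fills in a step the paper only asserts.
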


\begin{proof}
Expanding the quadratic form blockwise and using the symmetry of $A$ gives
\begin{equation*}
p^TAp
=
\sum_{i=1}^m
p_{\Omega_i}^TA_{ii}p_{\Omega_i}
+
2\sum_{i<j}
p_{\Omega_i}^TA_{ij}p_{\Omega_j}.
\end{equation*}
For each $i<j$,
\begin{equation*}
p_{\Omega_i}^TA_{ij}p_{\Omega_j}
=
\sum_{k\in\Omega_i}
\sum_{l\in\Omega_j}
a_{kl}p_kp_l
=
\sum_{(k,l)\in\mathcal{E}_{ij}}
a_{kl}p_kp_l.
\end{equation*}
Therefore,
\begin{equation*}
\sum_{i<j}
p_{\Omega_i}^TA_{ij}p_{\Omega_j}
=
\sum_{(k,l)\in\mathcal{E}}
a_{kl}p_kp_l
=
p^TU^TWLp,
\end{equation*}
which proves the result.
\end{proof}
\begin{lemma}\label{lemma:Ai-split}
Let $A_i$ be defined by Eq. \eqref{eq:A_i} and $\widehat{A} = \mathrm{diag}(A_1,\dots,A_m) \in \mathbb{R}^{\Omega \times \Omega}$. Then, for any $p\in\mathbb{R}^{\Omega}$,
\begin{equation}\label{eq:Ai-split-identity}
p^T\widehat{A}p 
=
\sum_{i=1}^m p_{\Omega_i}^T A_{ii}\, p_{\Omega_i}
-p^TL^TWLp - p^TU^TWUp
\end{equation}
\end{lemma}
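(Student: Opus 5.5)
The identity follows by expanding the block-diagonal quadratic form and identifying the diagonal corrections with interface-edge sums. Since $\widehat{A}=\mathrm{diag}(A_1,\dots,A_m)$ and $A_i=A_{ii}-\sum_{j\sim i}D_{ij}$ by Eq.~\eqref{eq:A_i}, we have
\begin{equation*}
p^T\widehat{A}p=\sum_{i=1}^m p_{\Omega_i}^TA_{ii}p_{\Omega_i}-\sum_{i=1}^m\sum_{j:\,j\sim i}p_{\Omega_i}^TD_{ij}p_{\Omega_i}.
\end{equation*}
It therefore suffices to show that the double sum equals $p^TL^TWLp+p^TU^TWUp$.

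For fixed $i\sim j$, the definition~\eqref{eq:Dij-def}, combined with Eq.~\eqref{eq:edge-contrib-D}, gives
\begin{equation*}
p_{\Omega_i}^TD_{ij}p_{\Omega_i}=\sum_{k\in\Omega_i}\Bigl(\sum_{l\in\Omega_j}a_{kl}\Bigr)p_k^2=\sum_{(k,l)\in\mathcal{E}_{ij}}a_{kl}p_k^2,
\end{equation*}
because $a_{kl}=0$ whenever $(k,l)\notin\mathcal{E}_{ij}$.

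Next, I regroup the sum over ordered neighboring pairs into unordered pairs $i<j$. For each such pair the two contributions are
\begin{equation*}
\sum_{(k,l)\in\mathcal{E}_{ij}}a_{kl}p_k^2\;+\;\sum_{(l,k)\in\mathcal{E}_{ji}}a_{lk}p_l^2 .
\end{equation*}
Using the symmetry $a_{lk}=a_{kl}$ and the identification of the edge $(l,k)\in\mathcal{E}_{ji}$ with $(k,l)\in\mathcal{E}_{ij}$, the second term becomes $\sum_{(k,l)\in\mathcal{E}_{ij}}a_{kl}p_l^2$. Here $k$ is always the endpoint in the lower-indexed subdomain $\Omega_i$ and $l$ the endpoint in $\Omega_j$.

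Summing over $i<j$ then yields $\sum_{(k,l)\in\mathcal{E}}a_{kl}p_k^2+\sum_{(k,l)\in\mathcal{E}}a_{kl}p_l^2$. By identities~\eqref{eq:LWL-identity} and~\eqref{eq:UWU-identity}, this is exactly $p^TL^TWLp+p^TU^TWUp$, which proves Eq.~\eqref{eq:Ai-split-identity}.

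The only delicate point is the bookkeeping in the regrouping. One must check that the convention behind Eqs.~\eqref{eq:LWL-identity}–\eqref{eq:UWU-identity} is the one used here: $(k,l)\in\mathcal{E}$ is written with $k$ in the lower-indexed subdomain, so that $L$ picks $p_k$ and $U$ picks $p_l$. Under that convention, each $D_{ij}$ term with $i<j$ produces the $L$-part and each $D_{ji}$ term produces the $U$-part, with no double counting.
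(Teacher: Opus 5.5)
Your proof is correct and follows essentially the same route as the paper. Like the paper, you expand $p^T\widehat{A}p$ using $A_i=A_{ii}-\sum_{j\sim i}D_{ij}$, rewrite each correction as $\sum_{(k,l)\in\mathcal{E}_{ij}}a_{kl}p_k^2$ via Eq.~\eqref{eq:edge-contrib-D}, regroup so that each interface edge contributes once through each endpoint, and conclude with Eqs.~\eqref{eq:LWL-identity}--\eqref{eq:UWU-identity}; your explicit check of the orientation convention ($L$ picks the lower-indexed endpoint, $U$ the higher) simply makes precise the paper's remark that each edge contributes once through each of its two endpoints.
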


\begin{proof}
Let $D_i := \sum_{j:j\sim i} D_{ij}$, where $D_{ij}$ is given by Eq.~\eqref{eq:Dij-def}.
Using Eq.~\eqref{eq:edge-contrib-D} and the definition of $D_i$, we obtain
\begin{equation*}\label{eq:Di-diag-expand}
p_{\Omega_i}^T D_i\, p_{\Omega_i}
=
\sum_{k\in\Omega_i} p_k (D_i p_{\Omega_i})_k
=
\sum_{k\in\Omega_i} p_k \sum_{j:j\sim i} (D_{ij}p_{\Omega_i})_k
=
\sum_{k\in\Omega_i} \sum_{j:j\sim i} \sum_{l:(k,l)\in\mathcal{E}_{ij}} a_{kl}\, p_k^2.
\end{equation*}

Rearranging the sums yields
\begin{equation*}\label{eq:Di-quadratic-edge}
\sum_{i=1}^mp_{\Omega_i}^T D_i\, p_{\Omega_i}
=
\sum_{i=1}^m\sum_{j:j\sim i}\ \sum_{(k,l)\in\mathcal{E}_{ij}} a_{kl}\, p_k^2
=
\sum_{(k,l)\in\mathcal{E}} a_{kl}\,(p_k^2 + p_l^2).
\end{equation*}

Indeed, each interface edge contributes once through each of its two
endpoints.
Furthermore, we have
\begin{equation}\label{eq:Di-global-alt}
\sum_{i=1}^m p_{\Omega_i}^T D_i\, p_{\Omega_i}
=
p^T U^T W U\, p
+
p^T L^T W L\, p,
\end{equation}
which follows from Eq. \eqref{eq:LWL-identity} and Eq. \eqref{eq:UWU-identity}.
Using the definition $A_i=A_{ii}-D_i$ (Eq. \eqref{eq:A_i}) and Eq. \eqref{eq:Di-global-alt}, we obtain the desired identity (Eq. \eqref{eq:Ai-split-identity})
\end{proof}

We are now in a position to prove the positive definiteness of $Q$.

\begin{theorem}\label{theorem:Q-psd}
Consider the same framework as in Proposition \ref{prop:simetry}.
Assume that $A$ and $-W$ are SPD matrices.
Then, for any $p\in\mathbb{R}^{\Omega}$ and any $\eta\in\mathbb{R}^r$, the quadratic form of $Q$ satisfies
\begin{equation}\label{eq:Q-edge-form}
\begin{pmatrix} p \\ \eta \end{pmatrix}^T
Q
\begin{pmatrix} p \\ \eta \end{pmatrix}
=
\frac{1}{4}\, p^T A\, p
-(\lambda-Mp)^TW(\lambda-Mp),
\qquad \lambda = Z\eta.
\end{equation}
In particular, $Q$ is symmetric positive definite.
\end{theorem}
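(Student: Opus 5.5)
Symmetry of $Q$ is immediate from its block structure, since $\widehat{A}$ and $W$ are symmetric. For the quadratic-form identity \eqref{eq:Q-edge-form}, I will expand the block form with $\lambda = Z\eta$:
\begin{equation*}
\begin{pmatrix} p \\ \eta \end{pmatrix}^T
Q
\begin{pmatrix} p \\ \eta \end{pmatrix}
=
\tfrac14 p^T\widehat{A}p + 2\lambda^T W M p - \lambda^T W\lambda .
\end{equation*}
Expanding the right-hand side of \eqref{eq:Q-edge-form} gives
\begin{equation*}
-(\lambda-Mp)^TW(\lambda-Mp) = -\lambda^TW\lambda + 2\lambda^TWMp - p^TM^TWMp .
\end{equation*}
The terms in $\lambda$ therefore match exactly, and the identity reduces to the purely $p$-dependent claim
\begin{equation*}
\tfrac14 p^T\widehat{A}p = \tfrac14 p^TAp - p^TM^TWMp .
\end{equation*}

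To prove this reduced claim, I combine Lemma~\ref{lem:block-expansion} and Lemma~\ref{lemma:Ai-split} by subtraction. This eliminates $\sum_i p_{\Omega_i}^TA_{ii}p_{\Omega_i}$ and yields
\begin{equation*}
p^TAp - p^T\widehat{A}p = 2p^TU^TWLp + p^TL^TWLp + p^TU^TWUp = p^T(L+U)^TW(L+U)p ,
\end{equation*}
where I use the symmetry of $W$ so that $p^TL^TWUp = p^TU^TWLp$. Since $M=\frac12(L+U)$ by \eqref{eq:avg-operator}, the right-hand side equals $4p^TM^TWMp$, and dividing by $4$ gives the reduced claim. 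Equivalently, in edge form via \eqref{eq:UWL-identity}--\eqref{eq:UWU-identity}, the difference is $\sum_{(k,l)\in\mathcal{E}} a_{kl}(p_k+p_l)^2$.

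For positive definiteness, both terms on the right of \eqref{eq:Q-edge-form} are nonnegative: $\frac14 p^TAp\ge 0$ because $A$ is SPD, and $-(\lambda-Mp)^TW(\lambda-Mp)\ge0$ because $-W$ is SPD. If the form vanishes, then $p^TAp=0$ forces $p=0$, and then $-\lambda^TW\lambda=0$ forces $\lambda=Z\eta=0$. To conclude $\eta=0$ I need $Z$ to have full column rank. This holds because $\mathrm{range}(Z)=\Lambda$ has dimension $r$ and $Z$ has exactly $r$ columns.

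The main obstacle is the bookkeeping of the cross term: confirming that $M^TWZ$ and $Z^TWM$ together contribute exactly $2\lambda^TWMp$, and that the mixed term $L^TWU$ matches the $2U^TWL$ term of Lemma~\ref{lem:block-expansion}. Both follow from the symmetry of the diagonal matrix $W$.
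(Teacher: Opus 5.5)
Your proposal is correct and follows essentially the same route as the paper: expand the quadratic form, reduce the identity to $p^T\widehat{A}p = p^TAp - 4p^TM^TWMp$, obtain that by combining Lemmas~\ref{lem:block-expansion} and~\ref{lemma:Ai-split} with $M=(L+U)/2$, and then read positive definiteness off the resulting sum of two nonnegative terms. You also make explicit a detail the paper leaves implicit: $Z$ has full column rank (its $r$ columns span the $r$-dimensional space $\Lambda$), which is what lets $\lambda=Z\eta=0$ force $\eta=0$.
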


\begin{proof}
The symmetry of $Q$ follows from the symmetry of both $\widehat{A}$ and $W$.
Let $p\in\mathbb{R}^{\Omega}$ and $\eta\in\mathbb{R}^r$, and set $\lambda:=Z\eta$.
Expanding the quadratic form, we obtain:
\begin{equation*}\label{eq:Q-start}
\begin{pmatrix} p \\ \eta \end{pmatrix}^T
Q
\begin{pmatrix} p \\ \eta \end{pmatrix}
=
\frac{1}{4}\, p^T \widehat{A}\, p
+ 2\, \lambda^T W M p
- \lambda^T W \lambda.
\end{equation*}

Now observe that
\begin{equation*}
\frac{1}{4}\, p^T A\, p
-(\lambda-Mp)^T W (\lambda-Mp)
= \frac{1}{4}\, p^T A\, p
- (Mp)^T W M p
+ 2\, \lambda^T W M p
- \lambda^T W \lambda.
\end{equation*}

Therefore, it suffices to show that
\begin{equation}\label{eq:tildeA-identity}
\frac{1}{4}\, p^T \widehat{A}\, p
= \frac{1}{4}\, p^T A\, p
- (Mp)^T W M p.
\end{equation}
from which the desired result follows.

Combining Lemmas~\ref{lemma:Ai-split} and~\ref{lem:block-expansion}, and using the fact that $W$ is diagonal, we obtain
\begin{align*}
p^T\widehat{A}p
&=
p^TAp
-
2p^TU^TWLp
-
p^TL^TWLp
-
p^TU^TWUp
\\
&=
p^TAp
-
p^T(L+U)^TW(L+U)p
\\
&=
p^TAp
-
4p^TM^TWMp,
\end{align*}
which yields Eq.~\eqref{eq:tildeA-identity}. It is easy to see from Eq.~\eqref{eq:Q-edge-form} that $Q$ is positive definite, since $A$ is positive definite and $-W$ is positive definite.
\end{proof}

\begin{corollary}\label{cor-schur-eta}
Under the same assumptions as in Theorem~\ref{theorem:Q-psd}, the Schur complement of $Q$ obtained after eliminating $p$ is also positive definite; see \cite[Proposition~14.1]{saad2003sparse}.
This Schur complement is given by
\begin{equation*}\label{s-schur}
    S_\eta = Z^T S_{\lambda} Z,
\end{equation*}
where
\begin{equation}\label{k-schur}
    S_{\lambda} = -\big(W + 4W M \widehat{A}^{-1} M^TW \big) = -\big(W + W M \widehat{A}^{-1} B\big).
\end{equation}
Notice that the positive definiteness of $S_\eta$ implies that $S_{\lambda} $ is positive definite on the interface space $\Lambda=\operatorname{range}(Z)$.
\end{corollary}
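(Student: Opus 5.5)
The approach is to reduce everything to the block structure of $Q$ from Proposition~\ref{prop:simetry} and the positive definiteness from Theorem~\ref{theorem:Q-psd}. Write $Q=\begin{pmatrix} Q_{11} & Q_{12}\\ Q_{12}^T & Q_{22}\end{pmatrix}$ with $Q_{11}=\tfrac14\widehat{A}$, $Q_{12}=M^TWZ$, $Q_{22}=-Z^TWZ$.

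\emph{Step 1: invertibility of $Q_{11}$.} Since $Q$ is SPD, its leading principal block $Q_{11}$ is SPD. This follows by testing the quadratic form of $Q$ with vectors $(p,0)$, or directly from Eq.~\eqref{eq:tildeA-identity}: $\tfrac14 p^T\widehat{A}p=\tfrac14 p^TAp-(Mp)^TWMp>0$ for $p\neq0$, because $A$ and $-W$ are SPD. Hence $\widehat{A}^{-1}$ exists and the elimination of $p$ is legitimate.

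\emph{Step 2: positivity of the Schur complement.} Form $S_\eta=Q_{22}-Q_{12}^TQ_{11}^{-1}Q_{12}$. To show it is positive definite, take $\eta\neq0$ and set $p=-Q_{11}^{-1}Q_{12}\eta$. A direct expansion gives
\[
\begin{pmatrix}p\\ \eta\end{pmatrix}^TQ\begin{pmatrix}p\\ \eta\end{pmatrix}=\eta^TS_\eta\eta .
\]
The vector $(p,\eta)$ is nonzero, so this quantity is positive by Theorem~\ref{theorem:Q-psd}. This is exactly the content of \cite[Proposition~14.1]{saad2003sparse}.

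\emph{Step 3: the explicit formula.} Substituting the blocks,
\[
S_\eta=-Z^TWZ-Z^TWM\,(4\widehat{A}^{-1})\,M^TWZ=Z^T\bigl(-W-4WM\widehat{A}^{-1}M^TW\bigr)Z,
\]
which identifies $S_\lambda$. The second form in Eq.~\eqref{k-schur} follows from Proposition~\ref{proposition-BWM}. Transposing $B^T=4WM$ and using the symmetry of the diagonal matrix $W$ gives $B=4M^TW$, so $4WM\widehat{A}^{-1}M^TW=WM\widehat{A}^{-1}B$.

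\emph{Step 4: positivity of $S_\lambda$ on $\Lambda$.} For $\lambda\in\Lambda$ with $\lambda\neq0$, write $\lambda=Z\eta$. Then $\eta\neq0$, since $\lambda=Z\eta\neq0$ forces $\eta\neq0$; injectivity of $Z$ is not needed for this direction. Hence $\lambda^TS_\lambda\lambda=\eta^TS_\eta\eta>0$.

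The only subtle point is whether $S_\eta$ is positive definite or merely semidefinite if $Z$ has dependent columns. This is settled by $\dim\Lambda=r$ equalling the number of columns of $Z$, so $Z$ has full column rank. In any case Step 2 already gives strict positivity for every $\eta\neq0$ directly from Theorem~\ref{theorem:Q-psd}, so there is no real obstacle; the argument is routine block algebra.
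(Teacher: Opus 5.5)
Your proposal is correct and follows essentially the same route as the paper, which only cites \cite[Proposition~14.1]{saad2003sparse} for positive definiteness of the Schur complement of an SPD matrix and substitutes the blocks of $Q$. Your Steps 2--3 spell out exactly that argument, with Proposition~\ref{proposition-BWM} giving the second form of $S_\lambda$. One small caveat: the full-column-rank property of $Z$ is not optional as your ``in any case'' remark suggests, since it is precisely what makes $Q$ in Theorem~\ref{theorem:Q-psd}, and hence $S_\eta$, definite rather than merely semidefinite.
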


Note that the matrix $S_{\lambda}$ defined in Eq.~\eqref{k-schur} is precisely the matrix that appears in the interface problem \eqref{eq:interface-problem-alg}. Therefore, by Corollary~\ref{cor-schur-eta}, the interface problem leads to a positive definite matrix.

\section{AlgMortar method as a preconditioner}
\label{sec:mortar-pc}

\subsection{Positive definiteness of the reduced system after interface variable elimination}
\label{sec:mortar-pc-positive}
We now consider AlgMortar as a PCG preconditioner. Eliminating the interface
variable yields a reduced SPD operator, as established in the following
corollary, thereby enabling the use of AlgMortar as a preconditioner for PCG.

\begin{corollary}\label{cor-schur-p}
Under the same assumptions as in Theorem~\ref{theorem:Q-psd}, the Schur complement of $Q$ associated with the elimination of the interface variable $\eta$ is positive definite; see, e.g., \cite[Proposition~14.1]{saad2003sparse}. Consequently, the reduced pressure system can be written equivalently as
\begin{equation*}
\mathcal{M}p = f,
\end{equation*}
where
\begin{equation}\label{eq-m-reduced-p}
\mathcal{M}
=
\widehat{A}
+
4M^T W Z
\left(Z^T W Z\right)^{-1}
Z^T W M.
\end{equation}
In particular, $\mathcal{M}$ is symmetric positive definite.
\end{corollary}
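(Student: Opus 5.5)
The plan is to obtain $\mathcal{M}$ as the Schur complement of $Q$ with respect to its lower-right block, and to inherit positive definiteness from Theorem~\ref{theorem:Q-psd}. Write $Q$ in block form with $Q_{11}=\frac14\widehat{A}$, $Q_{12}=M^TWZ=Q_{21}^T$, and $Q_{22}=-Z^TWZ$. The first step is to check that $Q_{22}$ is invertible, so that $\eta$ can be eliminated. Since $-W$ is SPD, for any $\eta\neq 0$ we have $\eta^T(-Z^TWZ)\eta=-(Z\eta)^TW(Z\eta)>0$, provided $Z\eta\neq0$. I will take $Z$ to have full column rank, which is implicit in $\dim\Lambda=r$ together with $\mathrm{range}(Z)=\Lambda$. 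Then $Q_{22}$ is SPD and, in particular, invertible. This also follows directly from $Q$ being SPD, since principal submatrices of SPD matrices are SPD.

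Next I would carry out the elimination. From the second block row of the system in Proposition~\ref{prop:simetry} we get
\begin{equation*}
\eta=(Z^TWZ)^{-1}Z^TWMp.
\end{equation*}
Substituting this into the first block row gives
\begin{equation*}
\tfrac14\widehat{A}p+M^TWZ(Z^TWZ)^{-1}Z^TWMp=\tfrac14 f.
\end{equation*}
Multiplying by $4$ yields $\mathcal{M}p=f$ with $\mathcal{M}$ as in Eq.~\eqref{eq-m-reduced-p}. The reduced system is therefore exactly $4$ times the Schur complement
\begin{equation*}
S_p=Q_{11}-Q_{12}Q_{22}^{-1}Q_{21}=\tfrac14\widehat{A}+M^TWZ(Z^TWZ)^{-1}Z^TWM.
\end{equation*}
Here the sign flips because $Q_{22}=-Z^TWZ$. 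Since $\eta$ is uniquely determined by $p$, the reduced system is equivalent to Eq.~\eqref{eq:local-system-lm}.

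The main remaining step is positive definiteness of $S_p$, which I would prove directly rather than just citing \cite[Proposition~14.1]{saad2003sparse}. Symmetry of $S_p$ is immediate because $\widehat{A}$, $W$ and $(Z^TWZ)^{-1}$ are all symmetric. For $p\neq0$, set $\eta^\ast=-Q_{22}^{-1}Q_{21}p$. A standard block computation then gives
\begin{equation*}
(p,\eta^\ast)^TQ(p,\eta^\ast)=p^TS_pp.
\end{equation*}
Since $(p,\eta^\ast)\neq0$, Theorem~\ref{theorem:Q-psd} makes the left-hand side strictly positive. Hence $S_p$, and therefore $\mathcal{M}=4S_p$, is SPD.

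The only subtle point is the full column rank of $Z$ in the invertibility of $Z^TWZ$. If $Z$ were rank deficient, I would replace it by a basis matrix of $\Lambda$; this leaves $\mathcal{M}$ unchanged, since the projector $WZ(Z^TWZ)^{-1}Z^TW$ depends only on $\mathrm{range}(Z)$.
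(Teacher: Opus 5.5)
Your proposal is correct and takes the same route as the paper. You eliminate $\eta$ through the invertible block $-Z^TWZ$, identify $\mathcal{M}$ as four times the Schur complement of $Q$, and inherit positive definiteness from Theorem~\ref{theorem:Q-psd}; where the paper simply cites the Schur-complement fact from Saad, you prove it inline via the quadratic form evaluated at $\eta^\ast$. Your closing caveat about rank-deficient $Z$ is unnecessary: $Z$ has $r$ columns and $\operatorname{rank}Z=\dim\Lambda=r$, so it automatically has full column rank.
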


\subsection{Spectral analysis of the preconditioned system}
\label{sec:mortar-pc-spectrum}

We derive a bound for the condition number of $\mathcal{M}^{-1}A$. On $\mathbb{R}^{\Omega}$, define the $A$-norm by $\|p\|_A^2=p^TAp$. On $\mathbb{R}^{\mathcal{E}}$, define $\|\eta\|_{-W}^2=-\eta^TW\eta$, assuming that $-W$ is positive definite.

\begin{lemma}\label{lemma:projection-lambda}
Assume that $-W$ is diagonal positive definite and that $Z$ has full column rank.
Let
\begin{equation}
\label{eq:projection-pi-lambda}
\Pi_{\Lambda}
=
Z(Z^T W Z)^{-1}Z^T W,
\end{equation}
be the $(-W)$-orthogonal projection from $\mathbb{R}^{\mathcal E}$ onto the mortar space
$\Lambda=\operatorname{range}(Z)$. Then, for every
$\eta \in \mathbb{R}^{\mathcal E}$,
\begin{equation}
    \label{eq:projection-complement-norm-identity}
    \|(I-\Pi_{\Lambda})\eta\|_{-W}^2=-\eta^T W(I-\Pi_{\Lambda})\eta.    
\end{equation}
\end{lemma}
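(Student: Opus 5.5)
The plan is to verify first that $\Pi_{\Lambda}$ is well defined and is a projection that is self-adjoint with respect to the inner product induced by $-W$. The identity then follows by expanding the squared norm and using idempotence.

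\textbf{Step 1: $\Pi_\Lambda$ is well defined.} Since $-W$ is symmetric positive definite and $Z$ has full column rank, for any $\xi\neq 0$ we have $Z\xi\neq 0$. Hence
\[
-\xi^T Z^T W Z \xi = \|Z\xi\|_{-W}^2 > 0 .
\]
So $Z^TWZ$ is negative definite, in particular invertible, and $\Pi_\Lambda$ in Eq.~\eqref{eq:projection-pi-lambda} makes sense.

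\textbf{Step 2: idempotence and self-adjointness.} A direct computation gives
\[
\Pi_\Lambda^2 = Z(Z^TWZ)^{-1}(Z^TWZ)(Z^TWZ)^{-1}Z^TW = \Pi_\Lambda .
\]
It follows that $(I-\Pi_\Lambda)^2 = I-\Pi_\Lambda$. Because $W$ is diagonal, hence symmetric, we also get
\[
W\Pi_\Lambda = WZ(Z^TWZ)^{-1}Z^TW = \Pi_\Lambda^T W .
\]
Therefore $W(I-\Pi_\Lambda) = (I-\Pi_\Lambda)^T W$, which says $I-\Pi_\Lambda$ is self-adjoint in the $W$-inner product, and thus also in the $(-W)$-inner product.

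\textbf{Step 3: the identity.} Using the definition of the norm, then self-adjointness, then idempotence,
\[
\|(I-\Pi_\Lambda)\eta\|_{-W}^2 = -\eta^T(I-\Pi_\Lambda)^T W (I-\Pi_\Lambda)\eta = -\eta^T W (I-\Pi_\Lambda)^2\eta = -\eta^T W(I-\Pi_\Lambda)\eta .
\]
This is exactly Eq.~\eqref{eq:projection-complement-norm-identity}.

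The only real obstacle is Step 1, the invertibility of $Z^TWZ$, and it is settled by the full-rank assumption on $Z$ together with the definiteness of $-W$. Everything else is routine algebra. As a side remark, the signs are harmless: the factor $W$ appears both inside and outside $(Z^TWZ)^{-1}$, so $\Pi_\Lambda$ coincides with the $(-W)$-orthogonal projection $Z(Z^T(-W)Z)^{-1}Z^T(-W)$.
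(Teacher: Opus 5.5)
Your proposal is correct and follows essentially the same route as the paper: idempotence of $\Pi_{\Lambda}$, the relation $\Pi_{\Lambda}^T W = W\Pi_{\Lambda}$ (from the symmetry of $W$ and $Z^TWZ$), and then expanding the $(-W)$-norm. Your Step~1, which checks that $Z^TWZ$ is invertible using the full column rank of $Z$ and the definiteness of $-W$, is a small addition that the paper leaves implicit.
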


\begin{proof}
Using the definition of $\Pi_{\Lambda}$, it is straightforward to verify that $\Pi_{\Lambda}^2=\Pi_{\Lambda}$, which implies that $(I-\Pi_{\Lambda})^2=I-\Pi_{\Lambda}$.
Now observe that, since $W$ is diagonal and $Z^TWZ$ is symmetric, we have
\begin{align*}
    \Pi_{\Lambda}^T W=
    \left[Z(Z^T W Z)^{-1}Z^T W\right]^T W =
    W Z (Z^T W Z)^{-1} Z^T W =
    W \Pi_{\Lambda}.
\end{align*}

Consequently $(I-\Pi_{\Lambda})^TW=W(I-\Pi_{\Lambda})$. Therefore,
\begin{align*}
    \|(I-\Pi_{\Lambda})\eta\|_{-W}^2
    &=
    \eta^T(I-\Pi_{\Lambda})^T(-W)(I-\Pi_{\Lambda})\eta \\
    &=
    -\eta^TW(I-\Pi_{\Lambda})^2\eta  =
    -\eta^T W(I-\Pi_{\Lambda})\eta,
\end{align*}
which proves the desired identity.
\end{proof}

Using the previous lemma, we can prove a bound for the eigenvalues of the preconditioned system matrix $\mathcal{M}^{-1}A$.
As observed in \cite{zhou2026algebraicmultiscalepreconditionerlarge}, since $A$ and $\mathcal{M}^{-1}$ are SPD matrices,
\begin{equation*}
\mathcal{M}^{-1}A
=
A^{-1/2}
\left(
A^{1/2}\mathcal{M}^{-1}A^{1/2}
\right)
A^{1/2}.
\end{equation*}

Thus, $\mathcal{M}^{-1}A$ is similar to the SPD matrix
$A^{1/2}\mathcal{M}^{-1}A^{1/2}$, and hence all eigenvalues of
$\mathcal{M}^{-1}A$ are real and positive.

\begin{proposition}\label{prop:bound-eigenvalues}
Let $\theta$ be an eigenvalue of $\mathcal{M}^{-1}A$. Then
\begin{align*}\label{prop:bound-eigenvalues-eq}
    \frac{1}{1+K_{\Lambda}} \leq \theta \leq 1,
\end{align*}
where
\begin{equation}\label{eq:k-lambda}
    K_\Lambda =
    \max_{p\neq 0}
    \frac{4\|(I-\Pi_{\Lambda})Mp\|_{-W}^2}{\|p\|_A^2}.
\end{equation}
\end{proposition}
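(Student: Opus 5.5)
We plan to compare the quadratic forms of $\mathcal{M}$ and $A$ and then use the Rayleigh-quotient characterization of the eigenvalues of the generalized problem $Ap=\theta\mathcal{M}p$. Since $\mathcal{M}^{-1}A$ is similar to $A^{1/2}\mathcal{M}^{-1}A^{1/2}$, its eigenvalues are exactly the values $\theta$ with $Ap=\theta\mathcal{M}p$. They therefore lie between $\min_{p\neq0} p^TAp/p^T\mathcal{M}p$ and $\max_{p\neq0} p^TAp/p^T\mathcal{M}p$. It thus suffices to prove, for all $p$,
\begin{equation*}
p^TAp\le p^T\mathcal{M}p\le (1+K_\Lambda)\,p^TAp .
\end{equation*}

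The key step is an exact identity for $p^T\mathcal{M}p - p^TAp$. From the proof of Theorem~\ref{theorem:Q-psd}, specifically Eq.~\eqref{eq:tildeA-identity}, we have $p^T\widehat{A}p=p^TAp-4(Mp)^TWMp$. Substituting this into Eq.~\eqref{eq-m-reduced-p}, and writing $\eta=Mp$ and $\Pi_\Lambda$ as in Eq.~\eqref{eq:projection-pi-lambda}, we get
\begin{equation*}
p^T\mathcal{M}p
=p^TAp-4\eta^TW\eta+4\eta^TWZ(Z^TWZ)^{-1}Z^TW\eta
=p^TAp-4\eta^TW(I-\Pi_\Lambda)\eta .
\end{equation*}
The last term uses $\eta^TW\Pi_\Lambda\eta=\eta^TWZ(Z^TWZ)^{-1}Z^TW\eta$, which is immediate from the definition of $\Pi_\Lambda$. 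Lemma~\ref{lemma:projection-lambda} then rewrites this as
\begin{equation*}
p^T\mathcal{M}p=p^TAp+4\|(I-\Pi_\Lambda)Mp\|_{-W}^2 .
\end{equation*}

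From this identity, the lower bound $p^T\mathcal{M}p\ge p^TAp$ is immediate because the added term is a squared norm. This gives $\theta\le1$. The upper bound follows directly from the definition of $K_\Lambda$ in Eq.~\eqref{eq:k-lambda}: $4\|(I-\Pi_\Lambda)Mp\|_{-W}^2\le K_\Lambda\|p\|_A^2$. Hence $p^T\mathcal{M}p\le(1+K_\Lambda)p^TAp$, which gives $\theta\ge 1/(1+K_\Lambda)$. We also note that $K_\Lambda$ is finite, since it is the maximum of a ratio of quadratic forms whose denominator is positive definite.

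The main obstacle is bookkeeping in the identity step. The sign conventions must be handled carefully, because $W$ is negative definite, and the factor $4$ from $B=4WM$ (transposed) must be tracked. The key check is that the correction term in $\mathcal{M}$ combines with $-4\eta^TW\eta$ into exactly $-4\eta^TW(I-\Pi_\Lambda)\eta$. The symmetry relation $\Pi_\Lambda^TW=W\Pi_\Lambda$, established in the proof of Lemma~\ref{lemma:projection-lambda}, makes this combination clean.
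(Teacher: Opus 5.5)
Your proposal is correct and follows essentially the same argument as the paper. Like the paper, you use Eq.~\eqref{eq:tildeA-identity} and Lemma~\ref{lemma:projection-lambda} to obtain the identity $p^T\mathcal{M}p = p^TAp + 4\|(I-\Pi_\Lambda)Mp\|_{-W}^2$, then bound the quotient $q^TAq/q^T\mathcal{M}q$ at an eigenvector between $1/(1+K_\Lambda)$ and $1$.
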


\begin{proof}
Using the definitions of $\mathcal{M}$ in Eq.~\eqref{eq-m-reduced-p} and
$\Pi_\Lambda$ in Eq.~\eqref{eq:projection-pi-lambda}, we obtain
\begin{align*}
    p^T\mathcal{M}p
    =
    p^T\hat{A}p
    +
    4(Mp)^T W\Pi_\Lambda (Mp), \quad \forall p \in \mathbb{R}^{\Omega}.
\end{align*}

Using Eq.~\eqref{eq:tildeA-identity}, this can be rewritten as
\begin{align*}
    p^T\mathcal{M}p
    =
    p^T A p
    -
    4(Mp)^T W(I-\Pi_\Lambda)(Mp).
\end{align*}

By Lemma~\ref{lemma:projection-lambda}, we have
\begin{align*}
    p^T\mathcal{M}p
    =
    p^T A p
    +
    4\|(I-\Pi_\Lambda)Mp\|_{-W}^2.
\end{align*}

Then, by the definition of $K_\Lambda$ and using that $\|\cdot\|_{-W} \ge 0$, we have
\begin{align}\label{prop:bound-eigenvalues-ineq}
p^TAp \le   p^T\mathcal{M}p =  p^T{A}p + 4 \|(I-\Pi_\Lambda)Mp\|_{-W}^2 \le (1+K_{\Lambda})p^TAp
\end{align}

Now let $(q,\theta)$ be an eigenpair of $\mathcal{M}^{-1}A$. Then
$Aq = \theta \mathcal{M}q$,
which implies
\begin{align*}
\theta = \frac{q^TAq}{q^T\mathcal{M}q}
\end{align*}

Therefore, from inequality \eqref{prop:bound-eigenvalues-ineq}, we obtain the desired result.
\end{proof}

\begin{remark}
The quantity $K_{\Lambda}$ measures the maximum deviation between the average of $p$ on the graph interface edges and its $W$-orthogonal projection onto $\Lambda$, normalized by the $A$-norm of $p$. Better approximation by the mortar space reduces $K_{\Lambda}$ and yields tighter eigenvalue clustering for $\mathcal{M}^{-1}A$.
\end{remark}

Next, following \cite{zhou2026algebraicmultiscalepreconditionerlarge}, we state the following theorem concerning the adjoint of the error propagation matrix with respect to the inner product induced by the matrix $A$.
\begin{theorem}\label{theorem-error-propagation}
Consider the error propagation matrix
\begin{align}\label{eq-error-propagation}
    E = I - \mathcal{M}^{-1}A .
\end{align}
Here, $A$ and $\mathcal{M}$ are SPD matrices.
Then $E$ satisfies $\langle Ex,y\rangle_A = \langle x,Ey\rangle_A$.
Furthermore, if $\|E\|_A \leq \delta < 1$, then
\begin{equation*}
    \kappa(\mathcal{M}^{-1}A)
    \leq
    \frac{1+\delta}{1-\delta},
\end{equation*}
where $\kappa(\mathcal{M}^{-1}A)$ denotes the condition number of  $\mathcal{M}^{-1}A$ in the $A$-norm.
\end{theorem}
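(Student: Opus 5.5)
First we establish the self-adjointness of $E$ with respect to $\langle x,y\rangle_A = y^TAx$. Since $A$ and $\mathcal{M}$ are symmetric, so is $\mathcal{M}^{-1}$. Then
\[
\langle Ex,y\rangle_A = y^TAx - y^TA\mathcal{M}^{-1}Ax
\quad\text{and}\quad
\langle x,Ey\rangle_A = y^TAx - (\mathcal{M}^{-1}Ay)^TAx = y^TAx - y^TA\mathcal{M}^{-1}Ax .
\]
The two expressions coincide, so $E$ is $A$-self-adjoint. The same computation shows that $T:=\mathcal{M}^{-1}A$ is $A$-self-adjoint. Moreover, $\langle Tx,x\rangle_A = (Ax)^T\mathcal{M}^{-1}(Ax)>0$ for $x\neq 0$, so $T$ is also positive in the $A$-inner product.

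Next we use the spectral theorem in the Euclidean space $(\mathbb{R}^{\Omega},\langle\cdot,\cdot\rangle_A)$. It gives an $A$-orthonormal eigenbasis of $T$ with real eigenvalues $\theta_i>0$; this is consistent with the similarity to $A^{1/2}\mathcal{M}^{-1}A^{1/2}$ noted before Proposition~\ref{prop:bound-eigenvalues}. In this basis $E$ is diagonal with eigenvalues $1-\theta_i$. Because $E$ is self-adjoint, its operator norm equals its spectral radius:
\[
\|E\|_A=\max_i|1-\theta_i|.
\]
The hypothesis $\|E\|_A\le\delta<1$ therefore gives $1-\delta\le\theta_i\le 1+\delta$ for every $i$, and in particular every $\theta_i$ is positive.

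Finally, the $A$-norm condition number of $T$ is $\|T\|_A\|T^{-1}\|_A$. Since $T$ is $A$-self-adjoint and positive, this equals $\theta_{\max}/\theta_{\min}$. The eigenvalue bounds then yield
\[
\kappa(\mathcal{M}^{-1}A)\le\frac{1+\delta}{1-\delta}.
\]

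The only delicate step is the identity $\|E\|_A=\rho(E)$, together with the identification of $\kappa$ in the $A$-norm with the eigenvalue ratio. Both rely on $A$-self-adjointness, which is why that property is proved first.
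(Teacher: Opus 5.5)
Your proof is correct and complete. The paper gives no argument of its own for this statement; it only cites \cite[Theorem~1]{zhou2026algebraicmultiscalepreconditionerlarge}. Your self-contained spectral argument therefore supplies what the paper outsources. It also uses the same expansion in an $A$-orthogonal eigenbasis that the paper applies in the proof of Theorem~\ref{theorem:error-propagation-bound}. Your version is cleaner there: the paper invokes orthogonality only across distinct eigenvalues, whereas the spectral theorem gives a full $A$-orthonormal eigenbasis even when eigenvalues repeat.

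One refinement concerns the step you flag as delicate, $\|E\|_A=\rho(E)$. You only use the easy direction $\rho(E)\le\|E\|_A$, and that holds for any induced norm without self-adjointness: if $Eq=(1-\theta)q$, then $|1-\theta|\,\|q\|_A=\|Eq\|_A\le\delta\|q\|_A$. Where $A$-self-adjointness is genuinely indispensable is the identification $\|T\|_A\|T^{-1}\|_A=\theta_{\max}/\theta_{\min}$. For an operator that is not $A$-normal, this product can exceed the eigenvalue ratio.
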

\begin{proof}
    See \cite[Theorem~1]{zhou2026algebraicmultiscalepreconditionerlarge}.
\end{proof}

Using Theorem~\ref{theorem-error-propagation}, we can prove bounds for the error propagation matrix associated with our method.

\begin{theorem}\label{theorem:error-propagation-bound}
The error propagation matrix defined in Eq.~\eqref{eq-error-propagation},
with $\mathcal{M}$ given by Eq.~\eqref{eq-m-reduced-p}, satisfies
\begin{equation}\label{ea-norm-bound}
    \|E\|_A \leq \frac{K_\Lambda}{1+K_\Lambda} < 1,
\end{equation}
where $K_\Lambda$ is given by Eq. \eqref{eq:k-lambda}.
Moreover, the condition number of the preconditioned system matrix is bounded by
\begin{equation*}
    \kappa(\mathcal{M}^{-1}A) \leq 1+2K_\Lambda .
\end{equation*}
\end{theorem}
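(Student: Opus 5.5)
The plan is to work entirely from the two-sided spectral inequality \eqref{prop:bound-eigenvalues-ineq} established in the proof of Proposition~\ref{prop:bound-eigenvalues}, namely $p^TAp\le p^T\mathcal{M}p\le(1+K_\Lambda)p^TAp$. By the similarity argument preceding that proposition, $\mathcal{M}^{-1}A$ is self-adjoint with respect to $\langle\cdot,\cdot\rangle_A$. Theorem~\ref{theorem-error-propagation} confirms this and gives the same for $E=I-\mathcal{M}^{-1}A$. Hence the $A$-norm of $E$ equals its spectral radius:
\begin{equation*}
\|E\|_A=\max_{\theta\in\sigma(\mathcal{M}^{-1}A)}|1-\theta|.
\end{equation*}
To justify this step I will pass to the SPD matrix $A^{1/2}\mathcal{M}^{-1}A^{1/2}$, which is similar to $\mathcal{M}^{-1}A$, and use that $\|E\|_A=\|A^{1/2}EA^{-1/2}\|_2=\|I-A^{1/2}\mathcal{M}^{-1}A^{1/2}\|_2$.

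Next I apply Proposition~\ref{prop:bound-eigenvalues}, which places every eigenvalue in $[1/(1+K_\Lambda),1]$. Therefore $0\le 1-\theta\le 1-\frac{1}{1+K_\Lambda}=\frac{K_\Lambda}{1+K_\Lambda}$, which gives \eqref{ea-norm-bound}. The strict inequality $<1$ needs only $K_\Lambda<\infty$. This holds because the maximum in \eqref{eq:k-lambda} is a Rayleigh quotient of the positive semidefinite matrix $4M^T(I-\Pi_\Lambda)^T(-W)(I-\Pi_\Lambda)M$ against the SPD matrix $A$, so it equals a finite generalized eigenvalue.

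For the condition number, I set $\delta=K_\Lambda/(1+K_\Lambda)$ and invoke Theorem~\ref{theorem-error-propagation}:
\begin{equation*}
\kappa(\mathcal{M}^{-1}A)\le\frac{1+\delta}{1-\delta}=\frac{(1+K_\Lambda)+K_\Lambda}{(1+K_\Lambda)-K_\Lambda}=1+2K_\Lambda.
\end{equation*}
As a sanity check, a direct eigenvalue ratio would give the sharper bound $1+K_\Lambda$, but the stated bound is what the cited theorem yields.

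The only delicate point is the identification of $\|E\|_A$ with the spectral radius, since that requires $A$-self-adjointness. I do not expect it to be a real obstacle, because it is exactly the first assertion of Theorem~\ref{theorem-error-propagation}. If needed, I can verify it directly from the symmetry of $A$ and $\mathcal{M}$: $\langle \mathcal{M}^{-1}Ax,y\rangle_A=x^TA\mathcal{M}^{-1}Ay=\langle x,\mathcal{M}^{-1}Ay\rangle_A$.
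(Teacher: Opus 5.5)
Your proposal is correct and follows essentially the same route as the paper: you bound $\|E\|_A$ by $\max_\theta|1-\theta|$ using the $A$-self-adjointness of $E$ and Proposition~\ref{prop:bound-eigenvalues}, then substitute $\delta=K_\Lambda/(1+K_\Lambda)$ into Theorem~\ref{theorem-error-propagation}. Passing to the symmetric matrix $I-A^{1/2}\mathcal{M}^{-1}A^{1/2}$ is a slightly cleaner justification than the paper's expansion in eigenvectors of $\mathcal{M}^{-1}A$, since that expansion tacitly requires choosing an $A$-orthogonal basis inside repeated eigenspaces; your remark that the eigenvalue bounds directly give the sharper estimate $\kappa(\mathcal{M}^{-1}A)\le 1+K_\Lambda$ is also correct.
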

\begin{proof}
Let us denote by $(q_j, \theta_j)$ the eigenvector-eigenvalue pairs of $\mathcal{M}^{-1}A$.
The eigenpairs of $E$ are therefore $(q_j,\mu_j)$, with
$\mu_j=1-\theta_j$. Since $E$ is self-adjoint with respect to the
$A$-inner product (Theorem~\ref{theorem-error-propagation}), eigenvectors
of $E$ associated with distinct eigenvalues are $A$-orthogonal.

Now, let $p \in \mathbb{R}^{\Omega}$ and write
$p = \sum_j \alpha_j q_j$.
Then,
\begin{equation*}
    Ep = \sum_j \alpha_j Eq_j
    = \sum_j \alpha_j \mu_j q_j
    = \sum_j \alpha_j (1-\theta_j)q_j .
\end{equation*}

Using the $A$-orthogonality of the $q_j$'s and Proposition \ref{prop:bound-eigenvalues}, we have:
\begin{equation*}
    \|Ep\|_A^2 = \sum _j \alpha_j^2 (1-\theta_j)^2\|q_j\|^2_A \leq \bigg(\frac{K_\Lambda}{1+K_\Lambda}\bigg)^2 \|p\|_A^2
\end{equation*}
Therefore, we obtain inequality~\eqref{ea-norm-bound}. The desired bound for $\kappa(\mathcal{M}^{-1}A)$ then follows from Theorem~\ref{theorem-error-propagation}.
\end{proof}

\subsection{Setup and application of the AlgMortar preconditioner}
\label{sec:mortar-pc-setup-apply}
The inverse of the matrix in Eq.~\eqref{eq-m-reduced-p} defines the proposed preconditioner. In practice, however, we avoid assembling it explicitly and apply the preconditioner through the MBFs for greater efficiency.

We split the implementation of the proposed preconditioner into two phases: the setup,  presented in Algorithm~\ref{alg:amm-preconditioner-setup}, and the application, presented in Algorithm~\ref{alg:amm-preconditioner-application}. The setup phase is performed once before the PCG iteration starts, whereas the apply phase is performed at each iteration, with the RHS given by the current residual.

\begin{algorithm}[!h]
\caption{Construction of the AlgMortar preconditioner}
\label{alg:amm-preconditioner-setup}
\begin{algorithmic}[1]

\STATE{\textbf{function} AlgMortarSetup$(A,f,m,\{\psi_k\}_{k=1}^{N_\Lambda})$}

\STATE Partition the graph of $A$ into $m$ subdomains
\STATE Compute the corresponding permutation matrix $P$

\STATE Construct the permuted matrix ${A} \gets P A P^T$ and the permuted RHS ${f} \gets P f$

\FORALL{$i=1,\ldots,m$}
    \STATE Extract the local matrix $A_{ii}$ from ${A}$ and modify its diagonal entries to get $A_i$
    \STATE Compute the Cholesky factorization $A_i = L_i L_i^T$

    \FORALL{$k\in\mathcal{I}_i$}
        \STATE Compute the MBFs $\widehat{p}_{\Omega_i}^{(k)}$ by solving $L_i L_i^T \widehat{p}_{\Omega_i}^{(k)} = -B_i \psi_k$
    \ENDFOR
\ENDFOR
\STATE Assemble the interface system matrix $S$ using the MBFs (see Eq.~\eqref{eq:interface-problem-alg})

\STATE \RETURN $({A},{f},P,\{L_i\}_{i=1}^m,
\{\widehat{p}_{\Omega_i}^{(k)}\}_{i=1}^m,S)$

\STATE{\textbf{end function}}

\end{algorithmic}
\end{algorithm}

\begin{algorithm}
\caption{Application of the AlgMortar preconditioner}
\label{alg:amm-preconditioner-application}
\begin{algorithmic}[1]

\STATE{\textbf{function} AlgMortarApply$({A},{f},
\{L_i\}_{i=1}^m,
\{\widehat{p}_{\Omega_i}^{(k)}\}_{i=1}^m,S)$}


\FORALL{$i=1,\ldots,m$}
    \STATE Extract the local RHS $f_{\Omega_i}$ from ${f}$
    \STATE Compute the local particular solution
    $\overline{p}_{\Omega_i}$ by solving
    $L_i L_i^T \overline{p}_{\Omega_i}=f_{\Omega_i}$
\ENDFOR

\STATE Assemble the interface RHS $b$ using
$\{\overline{p}_{\Omega_i}\}_{i=1}^m$
(see Eq.~\eqref{eq:interface-problem-alg})

\STATE Solve the interface system $S\lambda=b$

\FORALL{$i=1,\ldots,m$}
    \STATE Reconstruct the local pressure
    $p_{\Omega_i}
    =
    \overline{p}_{\Omega_i}
    +
    \sum_{k\in\mathcal{I}_i}
    \lambda_k\widehat{p}_{\Omega_i}^{(k)}$
\ENDFOR

\STATE Assemble the vector ${p}$ from
$\{p_{\Omega_i}\}_{i=1}^m$


\STATE \RETURN $p$

\STATE{\textbf{end function}}

\end{algorithmic}
\end{algorithm}

In practice, the system is permuted once during setup, PCG is applied in this ordering, and the converged solution is mapped back to the original ordering.

\subsection{Smoothing steps}\label{sec:smoothing}

We start from the residual $r=f-Ap$. Restricting it to the subdomain $\Omega_i$ and using Eqs.
\eqref{eq:Bij-final}, \eqref{eq:A_i}, and \eqref{eq:local-system-lm-a}, we obtain
\begin{equation*}
    r_{\Omega_i} = f_{\Omega_i}-A_i p_{\Omega_i}
       -\sum_{j:\,j\sim i} B_{ij}Mp = \sum_{j:\,j\sim i} B_{ij}(\lambda-Mp).
\end{equation*}

Thus, $(r_{\Omega_i})_k=0$ for every $k\in\Omega_i$ not adjacent to the partition interface, i.e.,
$\{l : (k,l)\in\mathcal{E}\}=\emptyset$. This follows directly from the definition of $B_{ij}$ in Eq.\eqref{eq:Bij-contrib}.
Therefore, AlgMortar generates nonzero residuals only on the interface of the graph partition. 
These residuals can be reduced by applying a smoothing procedure.












Smoothing strategies have also been combined with multiscale preconditioners to reduce high-frequency errors \cite{arbogast2015twolevel,yang2019coarsespace,bosma2021msrsb,fu2024preconditioner,chabi2024monotone,carvalho2025preconditioner,vasilyeva2025heatflow,changqing2025highconstrast}. We use block Jacobi as both pre- and post-smoother because it provided the lowest computational times among the strategies tested in Section~\ref{sec:num-exp}. We apply the same number of block-Jacobi smoothing steps before and after the AlgMortar correction to preserve symmetry.

\subsection{Interface space}
\label{sec:mortar-pc-interface}

To apply AlgMortar, an interface space must be specified. We adopt a piecewise constant space in which each basis function $\psi^k$ equals one on a single interface $\mathcal{E}_{ij}$ and zero elsewhere. The motivation is twofold. First, higher-order spaces are difficult to construct in a fully algebraic setting. Second, although higher-order spaces, such as piecewise linear spaces, may reduce the PCG iteration count for MMMFEM, they yield larger interface systems and higher computational costs than piecewise constant spaces \cite{carvalho2025preconditioner}.

\section{Implementation details}\label{sec:mortar-pc-algorithm}
The AlgMortar method proposed in this work was implemented using the Portable, Extensible Toolkit for Scientific Computation (PETSc)  \cite{balay2025petsc}, through its Python interface, \texttt{petsc4py} \footnote{\url{https://petsc.org/release/petsc4py/}}. 
PETSc supports parallel execution through the Message Passing Interface (MPI) and provides scalable solvers for high-performance scientific computing environments.

The proposed preconditioner was implemented using the PETSc PCShell interface. Block-Jacobi smoothing is incorporated through a multiplicative composition with PETSc's built-in block-Jacobi preconditioner, using its default configuration and specifying only the number of blocks. This number is typically smaller than the number of AlgMortar subdomains, producing larger blocks and more effective local smoothing. This choice performed well in the experiments of Section~\ref{sec:num-exp}.

For graph partitioning, we first use ParMETIS \cite{parmetis2013} through PETSc to divide the matrix graph into macro-partitions, one per MPI process. METIS is then applied locally within each macro-partition to construct the AlgMortar subdomains. Similar strategies appear in \cite{zhou2026algebraicmultiscalepreconditionerlarge}. In our experiments, this two-stage approach provided better load balancing and scalability than a single global ParMETIS partition.

To compute the MBFs, we follow \cite{jaramillo2022billion,carvalho2025preconditioner} and solve the local problems with the MUMPS direct solver \cite{mumps2001} through PETSc. During setup, MUMPS computes a Cholesky factorization of each local matrix using Approximate Minimum Degree (AMD) ordering, and these factorizations are reused during the application of the preconditioner. For the interface system, we consider two approaches: a direct MUMPS solve with Cholesky factorization and PORD ordering, following \cite{jaramillo2022billion,carvalho2025preconditioner}, and an iterative PCG solve with a small relative residual tolerance and block Jacobi preconditioning, which provided the best performance in our experiments.

In Section~\ref{sec:num-exp}, we compare the proposed method with the algebraic multigrid (AMG) method \cite{stuben2001amg}, using the BoomerAMG implementation \cite{henson2002amg} from the HYPRE library, available through PETSc. We adopt the BoomerAMG parameters reported in Table~1 of \cite{jaramillo2025porenetwork}, which, in our experiments, yielded substantially lower computational times than the default BoomerAMG configuration available through PETSc.

\section{Numerical experiments}
\label{sec:num-exp}
In this section, we evaluate AlgMortar as a PCG preconditioner using the piecewise constant interface space. We consider pressure systems arising from fine-grid, cell-centered finite-volume discretizations of the single-phase Darcy flow proble (Eq.~\eqref{eq:elliptic-problem}) using TPFA on both uniform Cartesian and corner-point grids.

Unless otherwise stated, PCG is initialized with the zero vector, and the interface problem is solved using PCG with block-Jacobi preconditioning and a relative tolerance of $10^{-2}$. AlgMortar uses 400 subdomains per MPI process. When block-Jacobi smoothing is applied, we perform 15 pre-smoothing and 15 post-smoothing steps using 80 blocks per MPI process.
The tests presented in this section were carried out on the Euler supercomputer\footnote{\url{https://euler.cemeai.icmc.usp.br}}, hosted by the Center for Mathematical Sciences Applied to Industry (CeMEAI) at the University of São Paulo, Brazil.

\subsection{Scalability of AlgMortar for a high-contrast cross-shaped permeability field}

We assess the scalability of AlgMortar using the isotropic high-contrast
permeability field from \cite{changqing2025highconstrast}. The domain
$\Omega=[0,1]^3$ is discretized by a uniform Cartesian grid. The permeability
tensor is $K=\kappa I$, where $\kappa=10^6$ along a cross-shaped pattern defined
on a base $8^3$ grid and $\kappa=1$ elsewhere, as shown in
Figure~\ref{fig:cross_perm_8}. Periodic replication of this pattern forms
connected high-permeability channels, illustrated in
Figures~\ref{fig:cross_perm_16} and~\ref{fig:cross_perm_24}. We impose $p=0$ on
$y=0$, $p=1$ on $y=1$, homogeneous Neumann conditions elsewhere, and a zero
source term.

\begin{figure*}[!ht]
    \centering

    \begin{subfigure}[b]{0.30\textwidth}
        \centering
        \includegraphics[width=\textwidth]{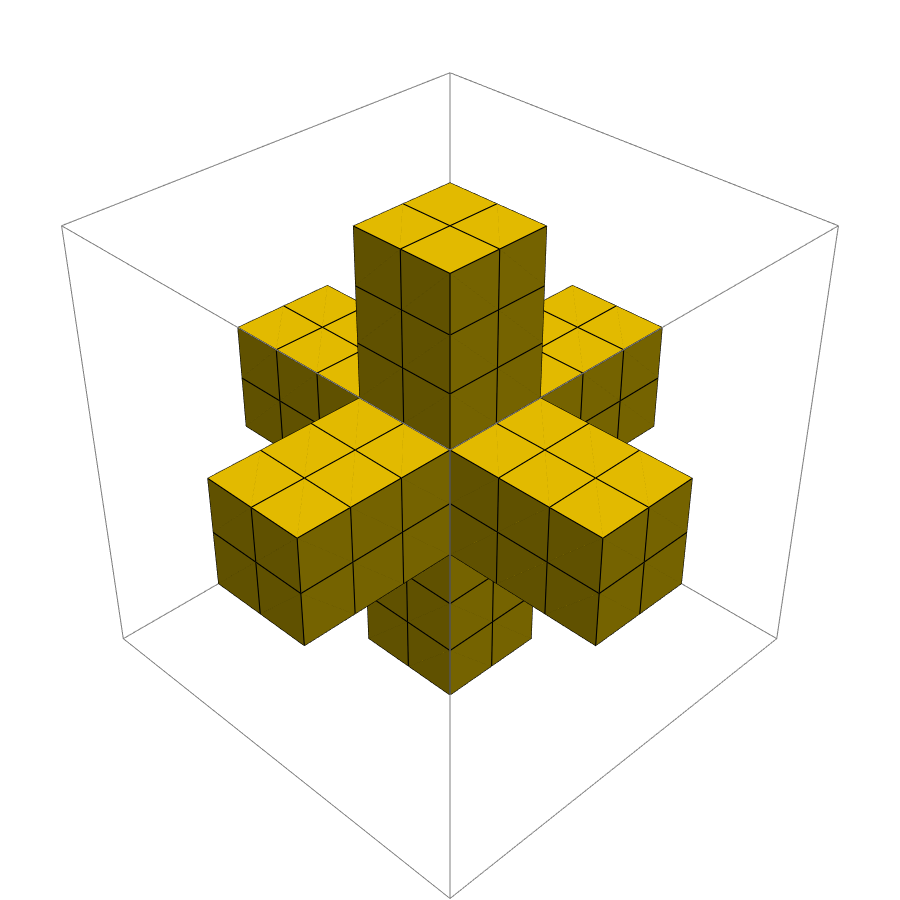}
        \caption{$8^3$ grid.}
        \label{fig:cross_perm_8}
    \end{subfigure}
    \hfill
    \begin{subfigure}[b]{0.30\textwidth}
        \centering
        \includegraphics[width=\textwidth]{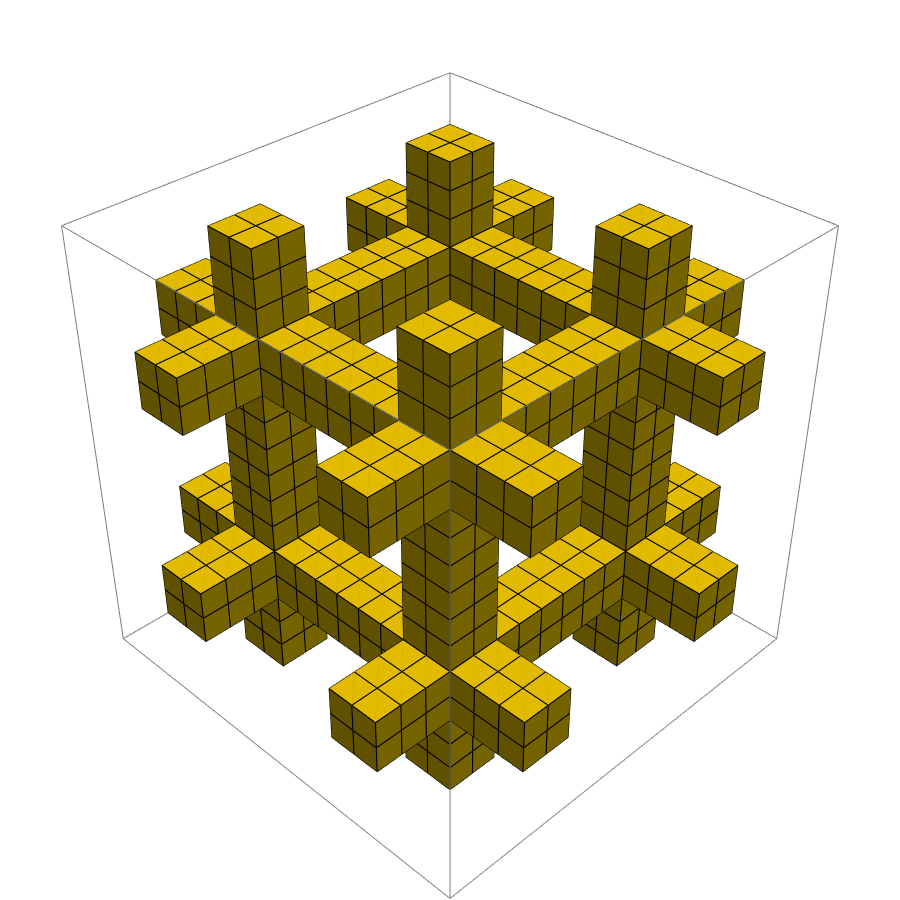}
        \caption{$16^3$ grid.}
        \label{fig:cross_perm_16}
    \end{subfigure}
    \hfill
    \begin{subfigure}[b]{0.30\textwidth}
        \centering
        \includegraphics[width=\textwidth]{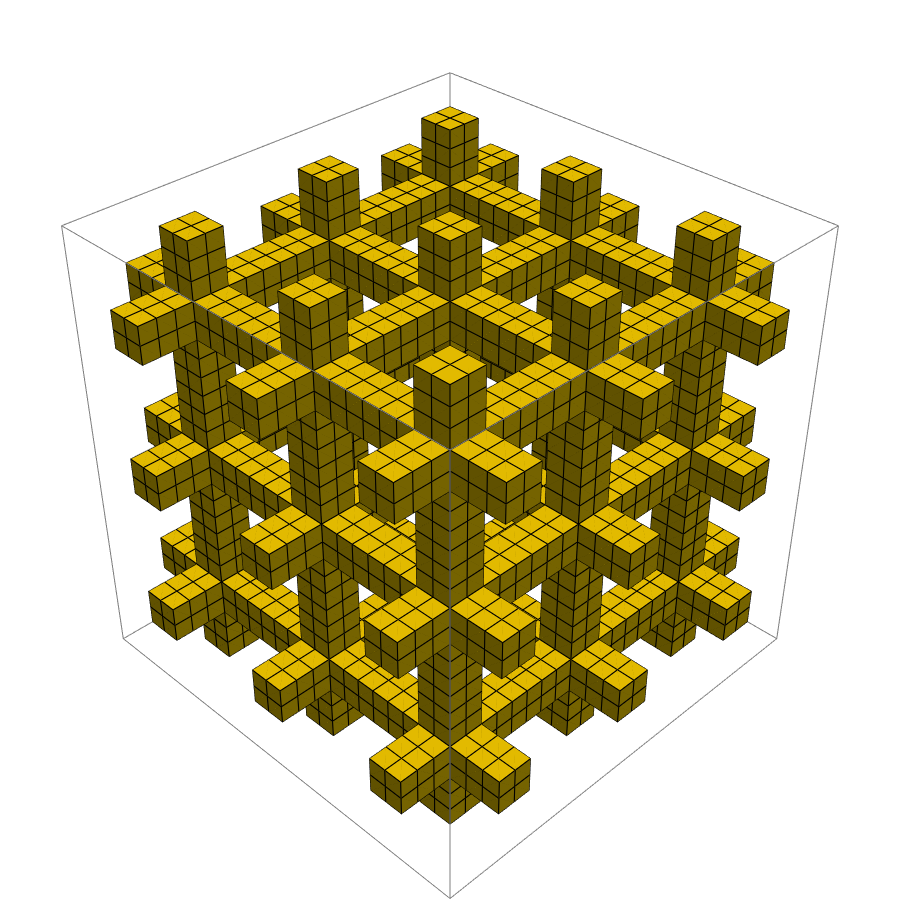}
        \caption{$24^3$ grid.}
        \label{fig:cross_perm_24}
    \end{subfigure}

    \caption{Illustration of the cross-shaped permeability field proposed by \cite{changqing2025highconstrast}:
    a base cross-shaped pattern on an $8^3$ grid and its periodic
    repetitions on $16^3$ and $24^3$ grids.}
    \label{fig:cross_perm}
\end{figure*}

\subsubsection{Weak scaling analysis}
\label{sec:weak-cross}
We consider Cartesian grids of size $(80r)\times(80r)\times(80r)$ distributed over $r^3$ MPI processes, with $r=3,4,5,6$. Thus, the number of degrees of freedom per process remains constant, defining a weak-scaling study. We set the relative residual tolerance ($\mathrm{rtol}$), measured in the Euclidean norm, to $10^{-11}$.


We compare AlgMortar using two strategies for the
interface system: a direct solve with MUMPS and an iterative solve with PCG
preconditioned by block Jacobi, using a relative residual tolerance of
$10^{-2}$ for the interface solve. The corresponding weak-scaling results are
reported in Table~\ref{tab:weak_scaling_total_time_iterations_cross}. As a
reference, we also include results obtained by solving the fine-grid problem
with PCG preconditioned by AMG.

\begin{table}[htbp]
\centering
\footnotesize
\caption{Weak-scaling results for the high-contrast cross-shaped permeability field.}
\label{tab:weak_scaling_total_time_iterations_cross}
\begin{tabular}{|c|c|c|c|c|c|}
\hline
\multirow{3}{*}{MPI proc.}
& \multirow{3}{*}{Global DOFs}
& \multirow{3}{*}{Interface size}
& \multicolumn{3}{c|}{Solution time in seconds (PCG iterations)} \\
\cline{4-6}
&
&
& \multicolumn{2}{c|}{AlgMortar}
& \multirow{2}{*}{AMG} \\
\cline{4-5}
&
&
& \shortstack{PCG-block Jacobi\\interface solver}
& \shortstack{MUMPS\\interface solver}
& \\
\hline
$27$
& $13{,}824{,}000$
& $75{,}149$
& $10.31\ (38)$
& $11.77\ (38)$
& $8.62\ (21)$ \\
$64$
& $32{,}768{,}000$
& $180{,}710$
& $12.47\ (37)$
& $15.17\ (36)$
& $15.13\ (22)$ \\
$125$
& $64{,}000{,}000$
& $358{,}820$
& $18.19\ (37)$
& $25.86\ (36)$
& $25.47\ (23)$ \\
$216$
& $110{,}592{,}000$
& $624{,}715$
& $21.05\ (37)$
& $39.34\ (36)$
& $40.02\ (24)$ \\
\hline
\end{tabular}
\end{table}
Table~\ref{tab:weak_scaling_total_time_iterations_cross} shows that the interface solver has little effect on the outer PCG iteration count, indicating that the iterative approach preserves the quality of the AlgMortar preconditioner. However, PCG with block Jacobi consistently reduces the total solution time compared with the direct solver. At $216$ MPI processes, the iterative and direct approaches require $21.05$ s and $39.34$ s, respectively, demonstrating the advantage of the iterative interface solver for this weak-scaling test.

Table~\ref{tab:weak_scaling_total_time_iterations_cross} also compares AlgMortar with AMG. Although AMG is faster on $27$ MPI processes, AlgMortar with the iterative interface solver is faster for all larger process counts. Its total time increases from $10.31$ s to $21.05$ s as the number of processes grows from $27$ to $216$, whereas the AMG time increases from $8.62$ s to $40.02$ s. Thus, AlgMortar with the iterative interface solver exhibits the best weak-scaling behavior among the methods considered.
\begin{figure*}[!ht]
    \centering

    \begin{subfigure}[b]{0.45\textwidth}
        \centering
        \includegraphics[width=\textwidth]
        {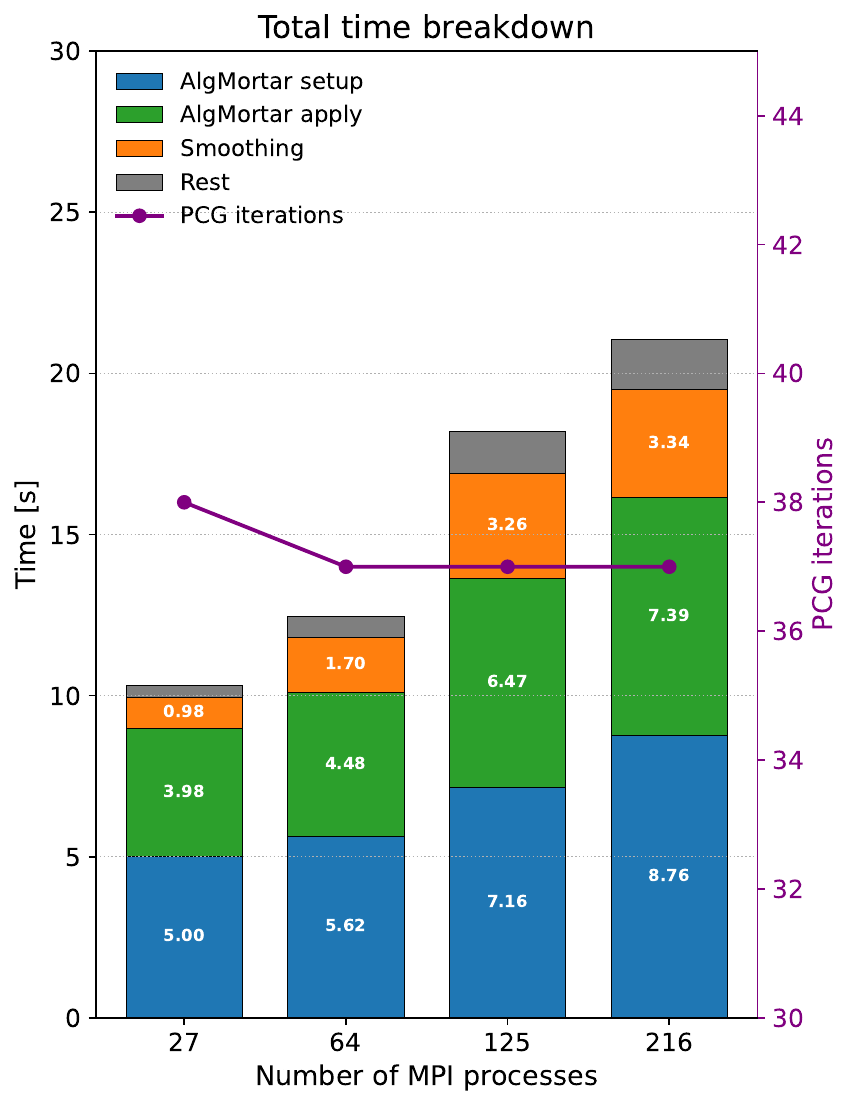}
        \caption{Total solution time.}
        \label{fig:cross_weak_scaling1}
    \end{subfigure}
    \hfill
    \begin{subfigure}[b]{0.45\textwidth}
        \centering
        \includegraphics[width=\textwidth]
        {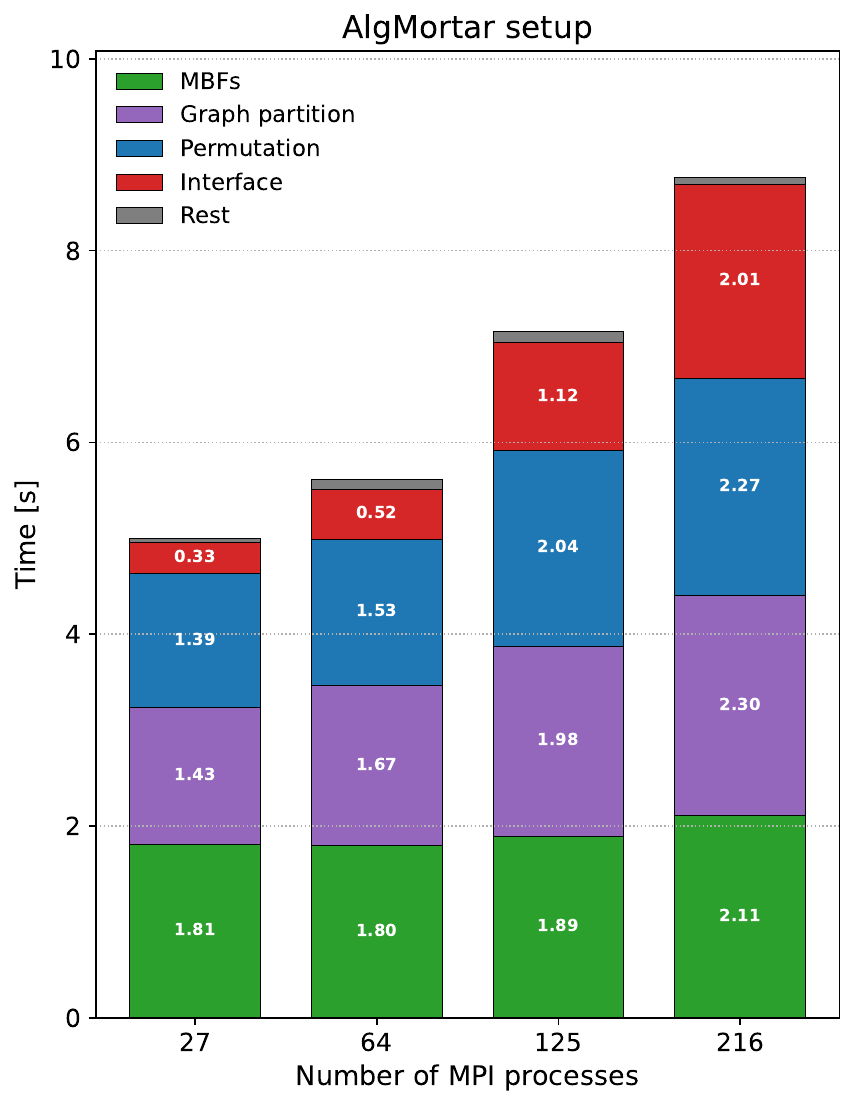}
        \caption{Setup time.}
        \label{fig:cross_weak_scaling2}
    \end{subfigure}

\caption{Weak-scaling time breakdown for AlgMortar on the high-contrast cross-shaped permeability field: (a) total time and (b) setup time.}
    \label{fig:cross_weak_time_breakdown}
\end{figure*}

Figure~\ref{fig:cross_weak_scaling1} presents the total-time breakdown, showing that the increase in the AlgMortar time with the iterative interface solver is mainly due to the setup phase.  The application time also increases, primarily because of interface-related operations. Figure~\ref{fig:cross_weak_scaling2} provides a detailed breakdown of this phase, in which the interface-system component is the dominant cost, followed by graph partitioning and permutation. In an existing parallel solver, the macro-partition may already be available, reducing this overhead to local micro-partitioning. Moreover, for time-dependent problems with fixed grid connectivity, the partitioning can be reused across time steps, further reducing this cost.

The weak-scaling results show that AlgMortar with the iterative interface solver exhibits good scalability and outperforms AMG in total solution time, with an increasing advantage for larger problems. Therefore, we adopt the iterative interface solver in all subsequent experiments.

\subsubsection{Strong scaling analysis}
\label{sec:strong-cross}

Next, we assess the strong scalability of AlgMortar for the cross-shaped permeability field. The fixed problem has $480\times480\times480$ cells, or $110{,}592{,}000$ degrees of freedom, while the number of MPI processes increases from $3^3$ to $6^3$. We set $\mathrm{rtol}=10^{-11}$.

\begin{table}[htbp]
\footnotesize
\caption{Strong-scaling results for AlgMortar on the cross-shaped permeability field.}
\label{tab:strong_scaling_total_time_cross}
\begin{center}
\begin{tabular}{|c|c|c|c|c|c|}
\hline
{MPI proc.}
& {Time (s)}
& {PCG it.}
& \shortstack{{Observed}\\{speedup}}
& \shortstack{{Ideal}\\{speedup}}
& \shortstack{{Parallel}\\{efficiency}} \\
\hline
$27$  & $119.83$ & $48$ & --     & --     & --     \\
$64$  & $50.01$  & $43$ & $2.40$ & $2.37$ & $1.01$ \\
$125$ & $33.37$  & $40$ & $3.59$ & $4.63$ & $0.78$ \\
$216$ & $21.05$  & $37$ & $5.69$ & $8.00$ & $0.71$ \\
\hline
\end{tabular}
\end{center}
\end{table}

\begin{figure*}[!ht]
\centering
\begin{subfigure}[b]{0.45\textwidth}
    \centering
    \includegraphics[width=\textwidth]
    {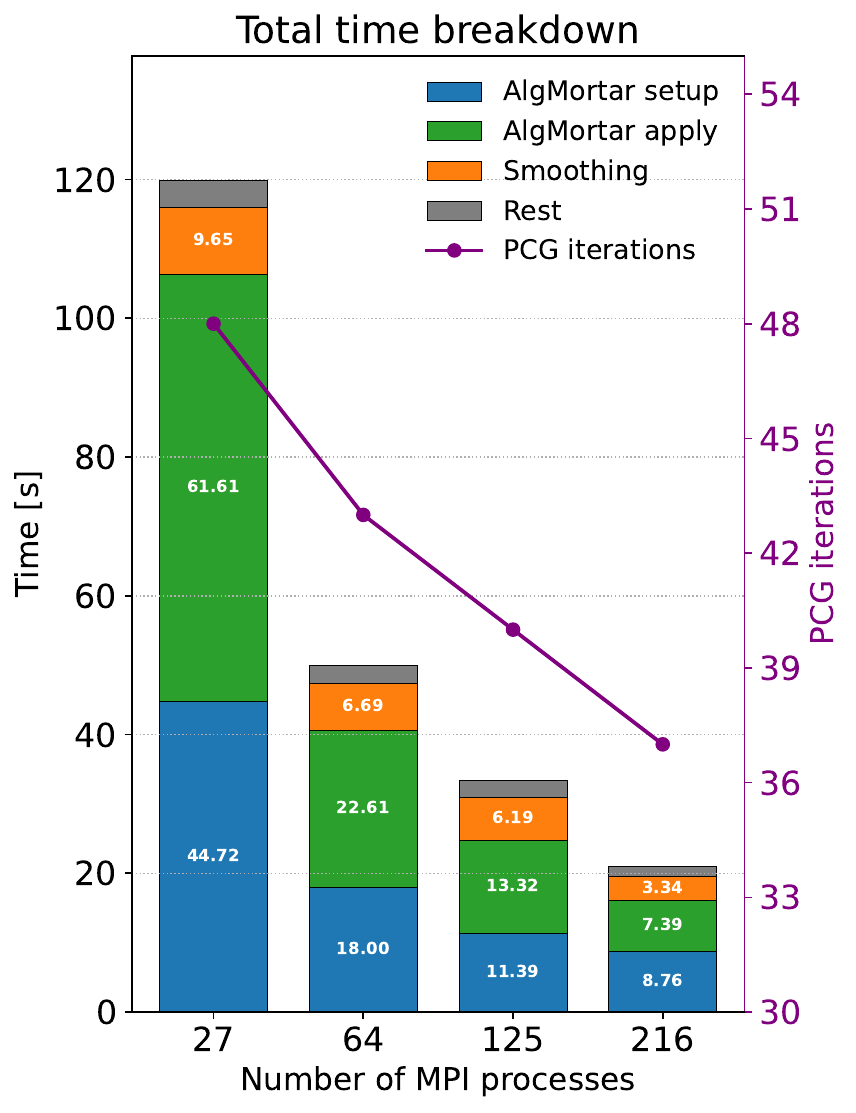}
    \caption{Total time.}
    \label{fig:cross_strong_scaling1}
\end{subfigure}
\hfill
\begin{subfigure}[b]{0.45\textwidth}
    \centering
    \includegraphics[width=\textwidth]
    {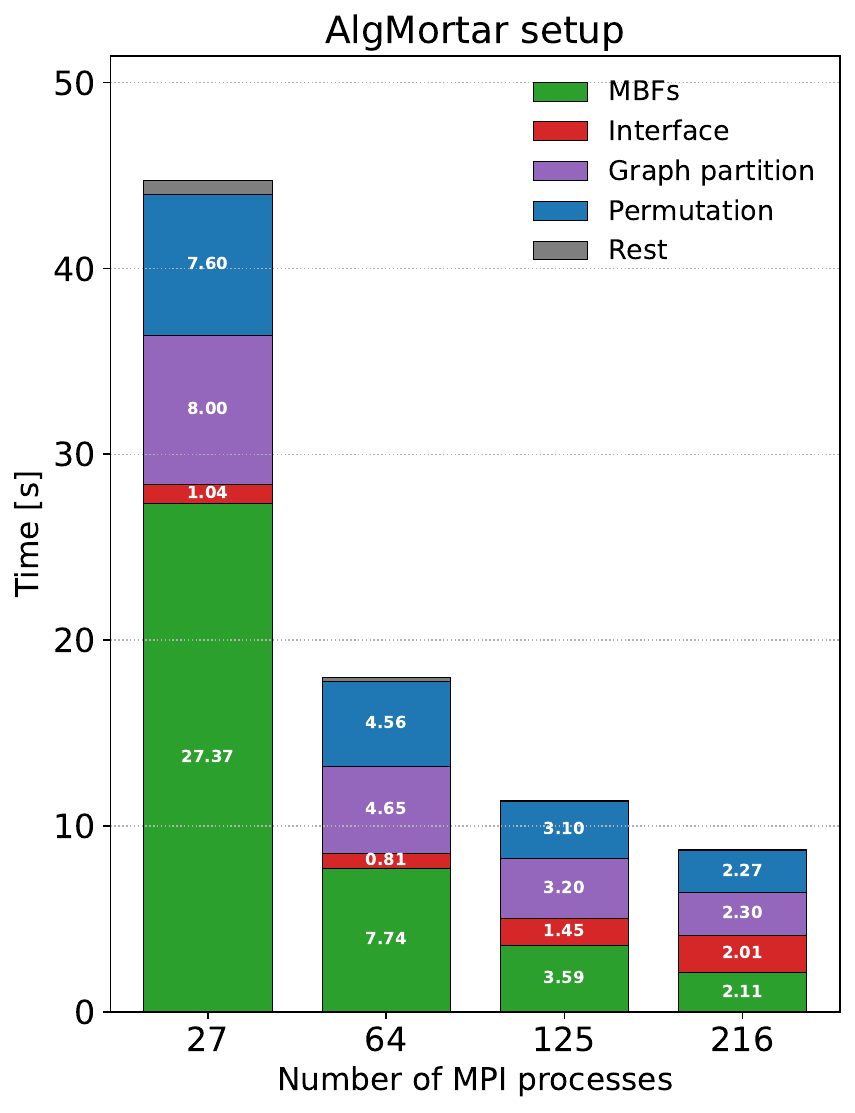}
    \caption{Setup time.}
    \label{fig:cross_strong_scaling2}
\end{subfigure}

\caption{Strong-scaling time breakdown for AlgMortar on the high-contrast cross-shaped permeability field: (a) total time and (b) setup time.}
\label{fig:cross_strong_scaling}
\end{figure*}

Table~\ref{tab:strong_scaling_total_time_cross} reports the solution time, PCG iterations, speedup, and parallel efficiency. The total time decreases from $119.83$ s on $27$ MPI processes to $21.05$ s on $216$, yielding a speedup of $5.69$ and an efficiency of $0.71$. The efficiency is nearly ideal at $64$ processes and remains above $0.70$ for the largest configuration, indicating good strong scalability of AlgMortar.

Figure~\ref{fig:cross_strong_scaling1} shows that both setup and application times decrease as the number of MPI processes increases, confirming the strong-scaling behavior reported in Table~\ref{tab:strong_scaling_total_time_cross}. The application phase dominates at lower process counts, while the setup phase becomes relatively more important at higher process counts. Figure~\ref{fig:cross_strong_scaling2} shows that the MBF computation scales well, whereas the interface component becomes the main setup bottleneck.

\subsection{Weak-scaling analysis of AlgMortar on the SPE10 benchmark}
\label{sec:scalability-spe10}

To assess scalability for a highly heterogeneous and representative porous-media flow problem, we consider the SPE10 benchmark \cite{christie2001spe10}. Following \cite{carvalho2025preconditioner}, we take $\Omega=[0,1]\times[0,2]\times[0,1]$, discretized by a $60\times120\times60$ uniform Cartesian grid. The isotropic permeability tensor is $K=\kappa I$, where $\kappa$ is the $x$-direction permeability from the first $60\times120\times60$ SPE10 cells. The permeability field and is shown in Fig.~\ref{fig:perm_spe10_60x120x60}. We impose $p=0$ on $y=0$, $p=1$ on $y=2$, homogeneous Neumann conditions elsewhere, and a zero source term. Following \cite{jaramillo2022billion,carvalho2025preconditioner}, the base-grid permeability is transferred to the fine grids by an $L^2$ projection, preserving a contrast of $2.80\times10^7$. For the weak-scaling study, we use grids of size $(60r)\times(120r)\times(60r)$ distributed over $r^3$ MPI processes, with $r=3,4,5,6$.

\begin{figure*}[htbp]
    \centering

    \begin{subfigure}[b]{0.43\textwidth}
        \centering
        \includegraphics[width=\textwidth]
        {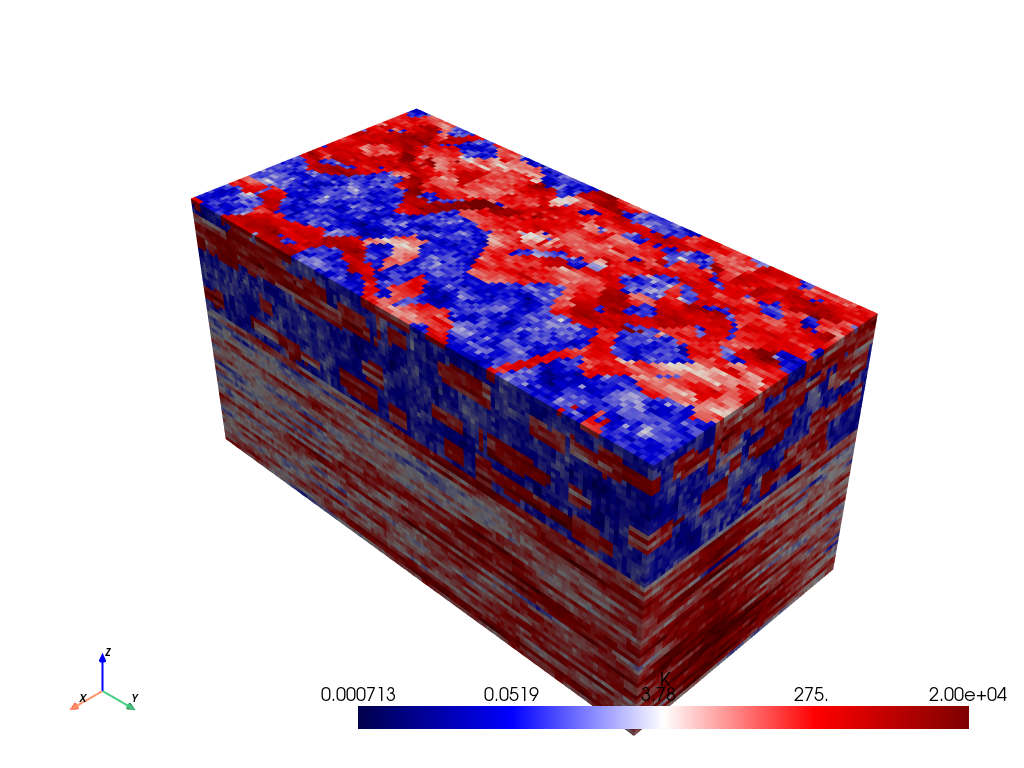}
        \caption{ $\kappa$ in $\log_{10}$ scale.}
        \label{fig:perm_spe10_60x120x60}
    \end{subfigure}
    \begin{subfigure}[b]{0.55\textwidth}
        \centering
        \includegraphics[width=\textwidth]
        {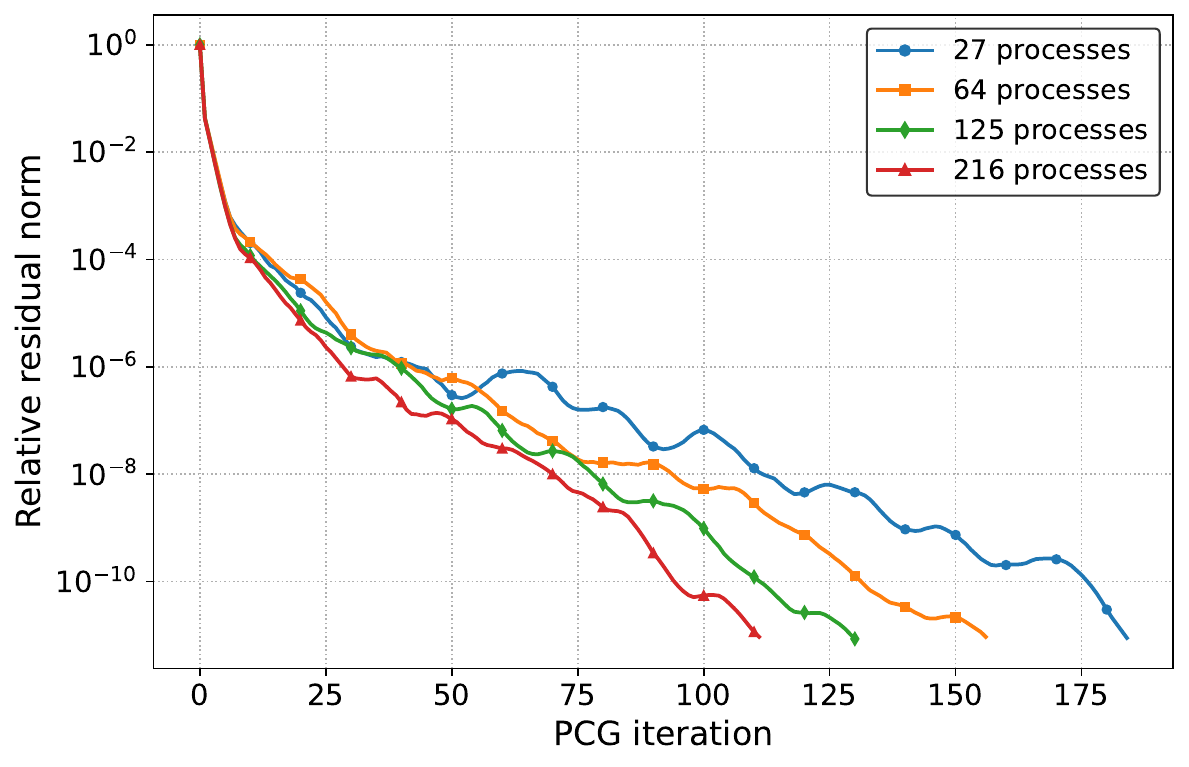}
        \caption{PCG residual histories.}
        \label{fig:weak-spe10-res}
    \end{subfigure}

    \caption{SPE10 test case: (a) horizontal permeability field
    $\log_{10}(\kappa)$ for the first $60\times120\times60$ cells and
    (b) PCG residual histories for the weak-scaling experiments.}
    \label{fig:spe10_permeability_residual_history}
\end{figure*}



Figure~\ref{fig:weak-spe10-res} shows the residual norm histories for the weak-scaling problems. For all MPI configurations, the residual decreases rapidly to approximately $10^{-6}$ within relatively few PCG iterations. Convergence then slows and becomes more configuration-dependent, requiring many additional iterations to reach $10^{-11}$. We therefore report results for both tolerances, since $10^{-6}$ may be sufficient in some practical applications.

\begin{table}[htbp]
\centering
\footnotesize
\caption{Weak-scaling results for SPE10. Each entry reports the total solution time in seconds, with the number of PCG iterations in parentheses.}
\label{tab:weak_scaling_total_time_iterations_spe10}
\begin{tabular}{|c|c|c|c|c|c|}
\hline
\multirow{3}{*}{MPI proc.}
& \multirow{3}{*}{Global DOFs}
& \multicolumn{4}{c|}{Solution time in seconds (PCG iterations)} \\
\cline{3-6}
&
& \multicolumn{2}{c|}{$\mathrm{rtol}=10^{-11}$}
& \multicolumn{2}{c|}{$\mathrm{rtol}=10^{-6}$} \\
\cline{3-6}
&
& AlgMortar
& AMG
& AlgMortar
& AMG \\
\hline
$27$
& $11{,}664{,}000$
& $26.15\ (184)$
& $10.51\ (29)$
& $9.64\ (44)$
& $8.73\ (14)$ \\
$64$
& $27{,}648{,}000$
& $29.37\ (156)$
& $16.01\ (30)$
& $11.63\ (42)$
& $12.72\ (15)$ \\
$125$
& $54{,}000{,}000$
& $39.17\ (130)$
& $27.89\ (32)$
& $16.68\ (40)$
& $21.20\ (16)$ \\
$216$
& $93{,}312{,}000$
& $39.64\ (111)$
& $42.11\ (34)$
& $16.50\ (29)$
& $32.62\ (17)$ \\
\hline
\end{tabular}
\end{table}

Table~\ref{tab:weak_scaling_total_time_iterations_spe10} reports the total solution times for AlgMortar and AMG using relative residual tolerances of $10^{-11}$ and $10^{-6}$. For $\mathrm{rtol}=10^{-11}$, the AlgMortar solution time increases from $26.15$ s on $27$ MPI processes to $39.64$ s on $216$ processes, while the PCG iteration count decreases from $184$ to $111$. In comparison, the AMG time increases from $10.51$ s to $42.11$ s, making AlgMortar slightly faster for the largest problem. For $\mathrm{rtol}=10^{-6}$, the AlgMortar time increases from $9.64$ s to $16.50$ s, with the iteration count decreasing from $44$ to $29$, whereas the AMG time grows from $8.73$ s to $32.62$ s. Consequently, AlgMortar is faster from $64$ MPI processes onward, with its advantage becoming more pronounced at $216$ processes.

These results show that AlgMortar remains robust as the SPE10 problem size increases and exhibits better weak scalability than AMG, particularly for $\mathrm{rtol}=10^{-6}$. Relaxing the tolerance reduces its solution time more significantly, while AMG remains dominated by setup costs. AlgMortar is therefore competitive for tight tolerances and faster for relaxed tolerances on larger problems.





\subsection{Contrast robustness}

We again consider the SPE10 permeability field used as the base case in
Section~\ref{sec:scalability-spe10} and study the effect of increasing the
permeability contrast. 
Following \cite{jaramillo2022billion,carvalho2025preconditioner}, we introduce a
parameter $\theta>0$ and replace the isotropic permeability field
$K=\kappa I$ by
$K_\theta = \kappa^\theta I$.
Thus, if the original contrast is $c$
then the contrast of the modified field is $c^\theta$.

\begin{table}[htbp]
\footnotesize
\caption{AlgMortar and AMG performance under SPE10 permeability amplification
$K_\theta=\kappa^\theta I$ on the $300\times600\times300$ and
$360\times720\times360$ grids.}
\label{tab:spe10-contrast-theta}
\begin{center}
\begin{tabular}{|c|c|c|c|c|c|}
\hline
\multirow{3}{*}{$\theta$}
& \multirow{3}{*}{Contrast}
& \multicolumn{4}{c|}{Solution time in seconds (PCG iterations)} \\
\cline{3-6}
&
& \multicolumn{2}{c|}{$300\times600\times300$}
& \multicolumn{2}{c|}{$360\times720\times360$} \\
\cline{3-6}
&
& AlgMortar
& AMG
& AlgMortar
& AMG \\
\hline
$0.6$
& $2.94 \times 10^{4}$
& $12.15\ (23)$
& $19.41\ (16)$
& $14.60\ (24)$
& $30.09\ (16)$ \\

$0.8$
& $9.07 \times 10^{5}$
& $13.69\ (29)$
& $20.87\ (17)$
& $16.19\ (29)$
& $31.73\ (17)$ \\

$1.0$
& $2.80 \times 10^{7}$
& $16.64\ (40)$
& $21.20\ (16)$
& $16.43\ (29)$
& $32.61\ (17)$ \\

$1.2$
& $8.64 \times 10^{8}$
& $19.06\ (49)$
& $22.00\ (16)$
& $17.19\ (31)$
& $33.53\ (17)$ \\

$1.4$
& $2.67 \times 10^{10}$
& $16.59\ (39)$
& $23.13\ (17)$
& $18.08\ (33)$
& $33.87\ (16)$ \\
\hline
\end{tabular}
\end{center}
\end{table}

We adopt $\mathrm{rtol}=10^{-6}$ and consider grids of
$300\times600\times300$ ($54{,}000{,}000$ DOFs) on 125 MPI
processes and $360\times720\times360$ ($93{,}312{,}000$ DOFs) on
216 MPI processes. The results are reported in
Table~\ref{tab:spe10-contrast-theta}.

AlgMortar remains robust as the permeability contrast increases, although its
iteration count is more sensitive to $\theta$ than that of AMG. On the smaller
grid, its iteration count increases from 23 to 49 for
$0.6\leq\theta\leq1.2$, then decreases to 39 at $\theta=1.4$; the time
similarly rises from 12.15 s to 19.06 s and drops to 16.59 s. AMG iterations
remain nearly constant, while time increases from 19.41 s to 23.13 s. On the
larger grid, AlgMortar time increases from 14.60 s to 18.08 s, compared with
30.09-33.87 s for AMG. Thus, AMG is more stable in iteration count, but
AlgMortar is faster for every contrast and both grids, especially the larger
one.

\subsection{SPE11 corner-point grid benchmark}

To evaluate the proposed preconditioner on more challenging grids, we consider
the SPE11 benchmark \cite{nordbotten2024spe11}. One of its test cases, SPE11B,
is a two-dimensional field-scale geological model in the $x$-$z$ plane.
The model can be discretized using a corner-point grid, a representation commonly employed in reservoir modeling that may include cells with extreme aspect ratios, nonorthogonal geometries, inactive cells, and nearly collapsed cells (Fig.~\ref{fig:spe11b-permeability-xz-slice}).

\begin{figure}[!ht]
    \centering
    \includegraphics[width=1\linewidth]{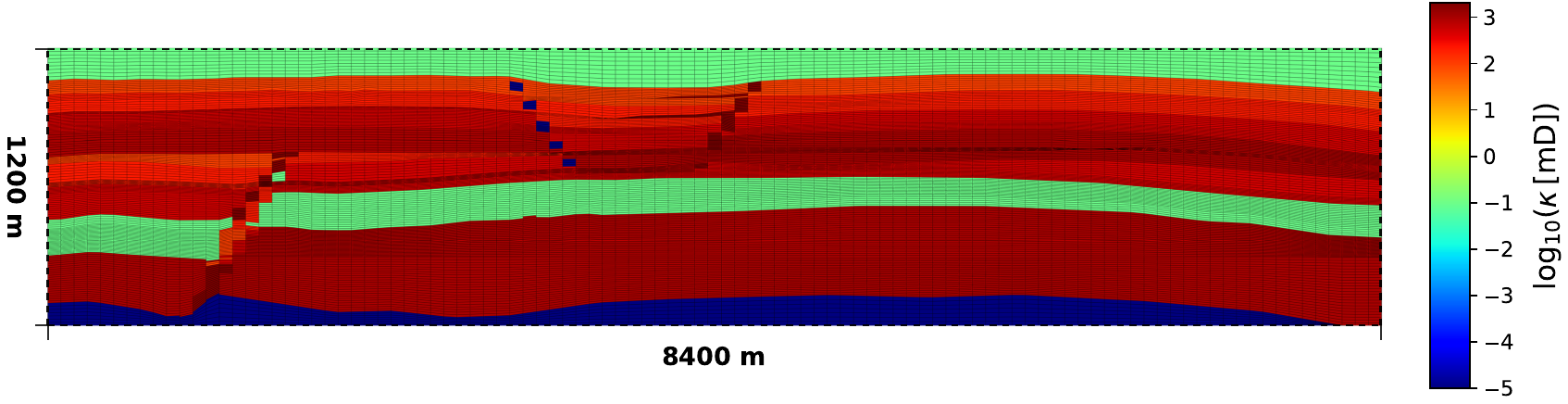}
\caption{Permeability field $\kappa$ and corner point grid generated with
\texttt{pyopmspe11} for the SPE11B model. Colors represent
$\log_{10}(\kappa)$, with $\kappa$ measured in millidarcies (mD).}
    \label{fig:spe11b-permeability-xz-slice}
\end{figure}

The SPE11C case extends SPE11B to three dimensions by extrusion in the
$y$-direction, followed by a parabolic deformation, as illustrated in
Fig.~\ref{fig:spe11c-permeability-3d-1m}. The corner-point grids and associated
geological properties are generated using the open-source
\texttt{pyopmspe11} framework \cite{marban2025pyopmspe11}. The computational
domain spans
$8400\,\mathrm{m}\times5000\,\mathrm{m}\times1200\,\mathrm{m}$. Larger grids
are constructed by refining the discretization in the $x$- and $y$-directions
while preserving the vertical resolution and geological structure. A small
number of logical cells are excluded during grid processing and therefore do
not contribute unknowns to the assembled system. The generated files are then
processed by a custom implementation based on the Open Porous Media (OPM)
framework \cite{rasmussen2021opm}, which discretizes the Darcy flow problem
using TPFA and assembles the corresponding system matrix and right-hand-side
vector.
\begin{figure}[!ht]
    \centering
    \includegraphics[width=1\linewidth]{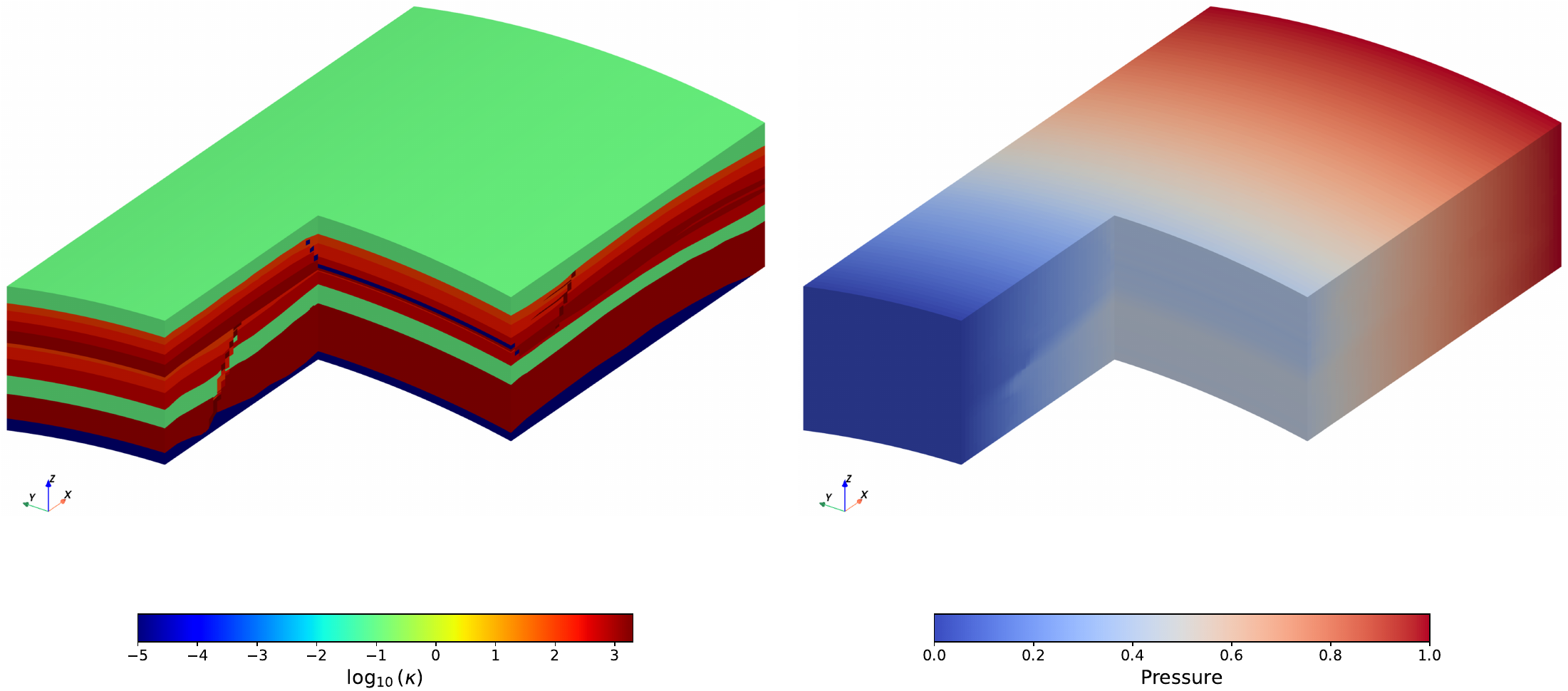}
\caption{Cutaway views of the SPE11C permeability field (left) and
pressure field (right) on the $103\times62\times157$ corner-point grid, which
contains $1{,}001{,}300$ active cells. Permeability colors represent
$\log_{10}(\kappa)$, with $\kappa$ in mD.}
    \label{fig:spe11c-permeability-3d-1m}
\end{figure}

The permeability field $\kappa$ generated for our experiments is shown in
Fig.~\ref{fig:spe11b-permeability-xz-slice} for the two-dimensional case and in
Fig.~\ref{fig:spe11c-permeability-3d-1m} for the three-dimensional case.
The permeability tensor is defined by
$K({x})=\operatorname{diag}\!\left(\kappa({x}),
\kappa({x}),0.1\kappa({x})\right)$, introducing vertical
anisotropy. Dirichlet conditions $p=0$ and $p=1$ are imposed at
$x=0$ and $x=8400\,\mathrm{m}$, respectively, while homogeneous Neumann
conditions are prescribed elsewhere. The resulting pressure field is shown in
Fig.~\ref{fig:spe11c-permeability-3d-1m}.

\begin{table}[htbp]
\footnotesize
\caption{Solver performance for the SPE11C cases 50M and 100M.}
\label{tab:timings_spe11}
\begin{center}
\begin{tabular}{|c|c|c|c|c|c|}
\hline
\multirow{3}{*}{Case}
& \multirow{3}{*}{MPI proc.}
& \multicolumn{4}{c|}{Solution time in seconds (PCG iterations)} \\
\cline{3-6}
&
& \multicolumn{2}{c|}{$\mathrm{rtol}=10^{-11}$}
& \multicolumn{2}{c|}{$\mathrm{rtol}=10^{-6}$} \\
\cline{3-6}
&
& AlgMortar
& AMG
& AlgMortar
& AMG \\
\hline
$50\,\mathrm{M}$
& $125$
& $25.77\ (83)$
& $29.72\ (29)$
& $11.21\ (19)$
& $23.50\ (13)$ \\

$100\,\mathrm{M}$
& $250$
& $38.45\ (108)$
& $34.84\ (32)$
& $15.98\ (23)$
& $25.95\ (15)$ \\
\hline
\end{tabular}
\end{center}
\end{table}

We consider two large-scale SPE11C cases with approximately
$50$ million ($50$M) and $100$ million ($100$M) cells.
The $50$M grid ($731\times436\times157$) contains $50{.}038{.}412$
cells, of which $49{.}974{.}320$ are active. The $100$M grid
($1035\times617\times157$) contains $100{.}259{.}415$ cells, of which
$100{.}129{.}845$ are active.

The $50$M and $100$M cases use $125$ and $250$ MPI processes, respectively, and are compared with AMG using $\mathrm{rtol}=10^{-6}$ and $\mathrm{rtol}=10^{-11}$.
Table~\ref{tab:timings_spe11} shows that AlgMortar is faster for both tolerances in the $50$M case and substantially faster for $\mathrm{rtol}=10^{-6}$ in both cases. AMG is slightly faster only for the $100$M case with $\mathrm{rtol}=10^{-11}$. These results demonstrate that AlgMortar remains competitive with AMG even on challenging corner-point grids.

\section{Conclusion}
\label{sec:conclusions}

This paper presented AlgMortar, a fully algebraic extension of the Multiscale Mortar Mixed Finite Element Method (MMMFEM).
Graph partitioning defines the domain decomposition directly from the matrix graph.
Local Dirichlet-like problems arise from local matrices with modified diagonal entries, while the algebraic interface coupling restores weak continuity of the normal flux. On Cartesian meshes, assuming homogeneous permeability, AlgMortar exactly recovers MMMFEM.

We proved that, for an SPD fine-grid matrix with nonpositive off-diagonal entries, the method is well posed and the resulting preconditioner is positive definite. We also derived condition-number estimates linking preconditioned-system behavior to the interface space's ability to represent average pressures on subdomain interfaces.

AlgMortar was evaluated as a conjugate-gradient preconditioner for systems from finite-volume single phase Darcy-flow discretizations on uniform Cartesian and corner-point grids. Results showed good weak and strong scalability for high-contrast permeability fields. AlgMortar remained effective on corner-point grids and matched or outperformed algebraic multigrid in solution time for the largest cases.

Future work will extend AlgMortar to transient multiphase problems, evaluate it on broader classes of unstructured meshes and non-TPFA discretizations, and explore improved interface spaces, scalable interface solvers, and graph partitioning strategies.

\bibliographystyle{siamplain}
\bibliography{references}
\end{document}